\def\be{\begin{equation}}
\def\ee{\end{equation}}
\def\beq{\begin{eqnarray}}
\def\eeq{\end{eqnarray}}
\def\beqs{\begin{eqnarray*}}
\def\eeqs{\end{eqnarray*}}
\def\ea{\end{array}}
\def\ea{\end{array}}
\def\bt{\begin{theo}}
\def\et{\end{theo}}
\def\bl{\begin{lemma}}
\def\el{\end{lemma}}
\def\br{\begin{rems}}
\def\er{\end{rems}}
\def\bc{\begin{coro}}
\def\ec{\end{coro}}
\theoremstyle{plain}
\newtheorem{theo}{Theorem}[section]
\newaliascnt{cor}{theo}
\newaliascnt{prop}{theo}
\newaliascnt{lemma}{theo}
\newtheorem{lemma}[lemma]{Lemma}
\newtheorem{prop}[prop]{Proposition}
\newtheorem{cor}[cor]{Corollary}
\theoremstyle{definition} 
\newaliascnt{defi}{theo}
\newaliascnt{assum}{theo}
\newaliascnt{assums}{theo}
\newaliascnt{prob}{theo}
\newtheorem{assums}[assums]{Assumptions}
\theoremstyle{remark}
\newaliascnt{rems}{theo}
\newaliascnt{rem}{theo}
\newaliascnt{exa}{theo}
\newaliascnt{exs}{theo}
\newtheorem{rems}[rems]{Remarks}
\newtheorem{rem}[rem]{Remark}
\numberwithin{theo}{section}
\numberwithin{equation}{section}
\numberwithin{figure}{section}
\DeclareMathOperator{\Id}{Id}
\DeclareMathOperator{\Ker}{Ker}
\DeclareMathOperator{\Ran}{Ran}
\DeclareMathOperator{\diag}{diag}
\DeclareMathOperator{\sign}{sgn}
\DeclareMathOperator{\lin}{span}
\DeclareMathOperator{\rank}{rank}
 \def\mG{\mathsf{G}} 
 \def\mV{\mathsf{V}}
 \def\mE{\mathsf{E}}
 \def\K{\mathbb{K}}
 \def\mmu{\mathsf{u}} 
 \def\mmv{\mathsf{v}}
 \def\mmx{\mathsf{x}} 
 \def\mmy{\mathsf{y}} 
  \def\mmh{\mathsf{h}} 
 \def\mv{\mathsf{v}}
 \def\me{\mathsf{e}}
 \def\mw{\mathsf{w}}
 \def\mf{\mathsf{f}}
 \def\mg{\mathsf{g}}
 \def\my{\mathsf{y}}
\def\cL{\mathcal{L}}
\newcommand{\R}{\mathbb{R}}
\newcommand{\C}{\mathbb{C}}
\newcommand{\N}{\mathbb{N}}
\newcommand{\bL}{\mathbf{L}}
\title{Dynamic Transmission Conditions for Linear Hyperbolic Systems on Networks} 
\author[M.~Kramar Fijav\v{z}]{Marjeta Kramar Fijav\v{z}}
\address{Marjeta Kramar Fijav\v{z}, University of Ljubljana, Faculty of Civil and Geodetic Engineering, Jamova 2, SI-1000 Ljubljana, Slovenia / Institute of Mathematics, Physics, and Mechanics, Jadranska 19, SI-1000 Ljubljana, Slovenia}
\email{marjeta.kramar@fgg.uni-lj.si}
\author[D.~Mugnolo]{Delio Mugnolo}
\address{Delio Mugnolo, Lehrgebiet Analysis, Fakult\"at Mathematik und Informatik, Fern\-Universit\"at in Hagen, D-58084 Hagen, Germany}
\email{delio.mugnolo@fernuni-hagen.de}
\author[S.~Nicaise]{Serge Nicaise}
\address{Serge Nicaise, Universit\'e Polytechnique Hauts-de-France, LAMAV,
FR CNRS 2956, 
F-59313 - Valenciennes Cedex 9 France}
\email{Serge.Nicaise@uphf.fr}
\keywords{Hyperbolic systems, operator semigroups, dynamic boundary conditions, PDEs on networks, invariance properties, second sound}
\subjclass[2010]{47D06, 35L40, 35R02, 81Q35}
\thanks{The work of M.K.F.\ was partially supported by the Slovenian Research Agency, Grant No.~P1-0222, and the work of D.M.~by the Deutsche Forschungsgemeinschaft (Grant 397230547). All the authors would like to acknowledge networking support by the COST Action CA18232. }
\begin{document}

\begin{abstract}
We study evolution equations on networks that can be modeled by means of hyperbolic systems. We extend our previous findings in~\cite{KraMugNic20} by discussing well-posedness under rather general transmission conditions that might be either of stationary or dynamic type - or a combination of both. Our results rely upon semigroup theory and elementary linear algebra. We also discuss qualitative properties of solutions.
\end{abstract}

\maketitle

\section{Introduction}

The present paper represents the second part of our investigations on linear hyperbolic systems.
Given a metric graph $\mathcal G$, i.e., a graph $\mG=(\mV,\mE)$ each of whose edges $\me\in\mE$ is identified with an interval $(0,\ell_\me)\subset \R$,
we are going to study evolution equations of the form 
\begin{equation}\label{eq:max1}
\dot{u_\me }(t,x)=M_\me(x)u'_\me(t,x)+ N_\me(x) u_\me (t,x), \quad t\ge 0,\ x\in (0,\ell_\me),\ \me\in \mE,
\end{equation}
where $u_\me$ is a vector-valued function of size $k_\me\in \N_1:=\{1,2,3, \ldots\}$, and $M_\me$ and $N_\me$ are 
 matrix-valued functions of size $k_\me\times k_\me$. 
Hence, each of these equations is supported on an edge: they are going to be coupled by means of suitable transmission conditions in the vertices. In~\cite{KraMugNic20} we have proposed a parametrization of such conditions that bears some similarity to the boundary conditions for scalar-valued, multi-dimensional transport equations studied in~\cite{LaxPhi60}. The goal of this paper is to extend it to general conditions that may be either of stationary, like in~\cite{KraMugNic20}; or of dynamic type.

In the case of systems of parabolic equations, dynamic boundary conditions have been studied at least since~\cite{Nic85} and classically interpreted as conditions of Wentzell-type arising in the theory of stochastic processes, see~\cite{Mug10} and references therein. For hyperbolic systems, however, dynamic boundary condition have been discussed far less frequently in the literature; specific classes of problems arising in applied mathematics have been investigated in~\cite{CurClaZha04,FerMisQua05,RuaClaZha07,BecImpJol14,Bec16}(system of first order problems) and~\cite{MorgulRaoConrad:94,HansenZuazua:95,Castro:97,ConradMorgul:98,GreMug20b} (systems of strings and/or beams with point masses at the junctions). 

In this paper we propose a unified formalism to capture hyperbolic systems with hybrid transmission conditions, including the extreme cases of purely stationary or  purely dynamic conditions; in fact, we can also allow for conditions that are dynamic only at some vertices and on some of the unknown's components; as we will see, this rather general setting is motivated by applications and leads to introducing a block operator matrix
\[
{\mathbb A}:=\begin{pmatrix}
\mathcal A & 0\\
\mathcal{B} & { \mathcal C}
\end{pmatrix}
\]
with coupled (i.e., non diagonal) domain on a suitable direct sum of Hilbert spaces: $\mathcal A$ is a first order differential operator encoding the dynamics driving~\eqref{eq:max1}, while operators $\mathcal{B,C}$ model (possibly nonlocal) damping phenomena in the vertices.

Just like in~\cite{KraMugNic20}, our main assumptions involve the existence of a \textit{Friedrichs symmetrizer}, an idea that goes back to~\cite{FriLax65}. We are going to show that unlike in the canonical setting considered in the literature, however, the Friedrichs symmetrizer of a hyperbolic system with dynamic boundary condition is an operator \textit{matrix}, with additional terms that control the boundary space -- a subspace of functions supported on a graph's vertices, in the case most relevant for us.

It is known from the theory of parabolic and wave equations with dynamic boundary conditions that boundary operators of higher order are useful to model close feedbacks that may stabilize the system. The role of the operator that couples the hyperbolic evolution with the boundary dynamics -- $\mathcal B$, in the notation above -- is even more central in the present context: indeed, we show that the dimensions of its range and null space directly impacts on the maximality of $\mathbb A$, and hence on the well-posedness of the associated Cauchy problem, see our main \autoref{prop:main1dyn}; backward well-posedness as well as energy conservation or decay properties can be characterized in terms of boundary conditions, too. These results, presented in \autoref{sec:dynam}, contain our main findings from~\cite{KraMugNic20} as special cases; they can be regarded as a parametrization of infinitely many realizations enjoying particularly good properties.

In \autoref{sec:qualitative} we then discuss qualitative properties enjoyed by solutions of our hyperbolic systems: in particular, we consider two relevant order intervals of the Hilbert space and discuss their invariance under the semigroup that governs the system by presenting sufficient (and, sometimes, necessary) conditions on the boundary conditions. 

In \autoref{sec:examples} we revisit some known hyperbolic-type equations with dynamic conditions, including transport equations~\cite{Sik05}, a second sound model~\cite{Rac02}, and a 1D Maxwell system~\cite{Bec16}. We also consider Dirac equations on networks, for which a parametrization of infinitely many realizations governed by a unitary group (resp., contractive semigroup) was first studied in~\cite{BolHar03} (resp., \cite{KraMugNic20}); we show that infinitely many further relevant realizations naturally arise by allowing for dynamic conditions.

We furthermore study qualitative properties of solutions of these equations by  applying our abstract theory. It turns out that the above mentioned conditions for invariance are rather restrictive: while real-valued initial data give rise to real-valued solutions in most applications, we see that positivity or a priori estimates in $\infty$-norm for the solutions can be seldom observed.

\section{General setting}\label{sec:general}

We are going to collect different sets of assumptions that we are going to impose in the following; roughly speaking, they are of combinatorial, analytic, and operator theoretical nature, respectively.

\begin{assums}\label{assum-graph} 
$\mG=(\mV,\mE)$ is a nonempty, finite \textit{combinatorial graph}, $(k_\me)_{\me\in \mE}$ is a family of positive integers and $(\ell_\me)_{\me\in \mE}$ is a family of positive numbers.
\end{assums}

In the following, we adopt the notation 
\[
k:=\sum_{\me\in\mE} k_\me\quad\text{and} \quad k_\mv:=\sum_{\me\in\mE_\mv} k_\me,
\]
where $\mE_\mv$ is the set of all edges incident in $\mv$. 
Notice that
\begin{equation}\label{eq:2k}
\sum_{\mv\in\mV}k_\mv=2k,
\end{equation}
by the Handshaking Lemma.

We rakishly turn $\mG$ into a \textit{metric graph} (or \textit{network}) $\mathcal G$ by identifying each $\me\in \mE$ with an interval $[0,\ell_\me]\subset\R$; a more precise definition can be found in~\cite{Mug19}. We further impose standard assumptions on the coefficient matrices $M,N$ that appear in~\eqref{eq:max1}; additionally, we require the existence of a Friedrichs symmetrizer $Q$. 
\footnote{Here and in the following we denote by $M_{n,m}(\K)$ the space of all $n\times m$-matrices on the field $\K$; and $M_n(\K):=M_{n,n}(\K)$.}

\begin{assums}\label{assum-MQN} 
For each $\me\in\mE$, $M_\me,N_\me:[0,\ell_\me]\to M_{k_\me}(\C)$ are mappings such that the following hold.
\begin{enumerate}[(1)] 
\item $[0,\ell_\me]\ni x\mapsto M_\me(x)\in M_{k_\me}(\C)$ is 
Lipschitz continuous; and $M_\me(x)$ is invertible for each $x\in [0,\ell_\me]$.
\item $[0,\ell_\me]\ni x\mapsto N_\me(x)\in M_{k_\me}(\C)$ is of class $L^\infty$.
\item\label{assum:Q} There exists a Lipschitz continuous mapping
$[0,\ell_\me]\ni x\mapsto Q_\me(x)\in M_{k_\me}(\C)$ such that 
\begin{enumerate}[(i)]
\item\label{assumMe} $Q_\me(x)$ and $Q_\me(x) M_\me(x)$ are Hermitian for all $x\in [0,\ell_\me]$ and
\item
\label{sn:29/1:1} $Q_\me(\cdot)$ is \emph{uniformly positive definite}, i.e.,
there exists $q>0$ such that 
\[Q_\me(x)\xi \cdot \bar \xi \geq q \|\xi \|^2
\text{ for all }\xi \in \C^{k_\me} \text{ and } x\in [0,\ell_\me].\]
\end{enumerate}
\end{enumerate}
\end{assums}
\autoref{assum-MQN} are identical with~\cite[Assumptions~2.1]{KraMugNic20}.

We introduce for each $\mv\in\mV$
 the trace operator $\gamma_\mv: \bigoplus_{\me\in\mE}
 H^1(0,\ell_\me)^{k_\me}\to \C^{k_\mv}$ defined by \[
\gamma_\mv(u):= 
\left(u_\me(\mv)\right)_{\me\in\mE_\mv},\qquad \mv\in\mV,
\]
and the $k_\mv\times k_\mv$ block-diagonal matrix $T_\mv$ with $k_\me\times k_\me$ diagonal blocks 
\begin{equation}
\label{eq:defTv}
T_\mv:= 
\diag \left( Q_\me(\mv) M_\me(\mv) {\iota}_{\mv \me}
\right)_{\me\in\mE_\mv},\qquad \mv\in\mV,
\end{equation}
where we recall that the $|\mV|\times |\mE|$ (signed) \textit{incidence matrix} $\mathcal I=(\iota_{\mv\me})$ of the graph $\mG$ is defined by
\begin{equation}
\label{eq:defIv}
 \mathcal I:=\mathcal I^+-\mathcal I^-
\end{equation}
with $\mathcal I^+=(\iota^+_{\mv\me})$ and $\mathcal I^-=(\iota_{\mv\me}^-)$ given by
\[
{\iota}_{\mv \me}^+:=\left\{
\begin{array}{ll}
1 & \hbox{if } \mv \hbox{ is terminal endpoint of } \me, \\
0 & \hbox{otherwise,}
\end{array}\right.
\qquad
{\iota}_{\mv \me}^-:=\left\{
\begin{array}{ll}
1 & \hbox{if } \mv \hbox{ is initial endpoint of } \me, \\
0 & \hbox{otherwise.}
\end{array}\right.
\]

Unlike in our earlier work~\cite{KraMugNic20}, our aim is to develop a setting that will eventually allow us to impose dynamic boundary conditions on a subset of the vertex set $\mV$. Ideas that go back to~\cite{AmaEsc96,FavGolGol01,AreMetPal03} suggest to study the relevant evolution equation as a Cauchy problem on a larger Hilbert space. The necessary formalism can be introduced as follows.

 \begin{assums}\label{assum-spaces}
For each $\mv\in \mV$ the following holds.
\begin{enumerate}[(1)]
\item $Y^{(d)}_\mv \subset Y_\mv$ are subspaces of $\mathbb{C}^{k_\mv}$;
\item $B_\mv\colon Y_\mv\to Y^{(d)}_\mv$ is a linear operator;
\item $C_\mv$ is a  linear operator on $Y^{(d)}_\mv$;
\item $Q_\mv$ is a hermitian and positive definite operator on ${Y}^{(d)}_\mv$.
\end{enumerate}
\end{assums}
We stress that the assumptions on $Q_\me$ and $Q_\mv$ are structurally different. While, given a system of differential equations, we can only study it by the means of the theory presented in this paper if we are able to \textit{find} suitable Friedrich symmetrizers $Q_\me$ leading to a Hermitian product $Q_\me M_\me$, in the following we are free to take $Q_\mv$ as we wish. The ``lazy'' choice of $Q_\mv=\mathbb I$ is always allowed, but the main results in \autoref{sec:dynam} show that  it pays off to pick $Q_\mv$ tailored to enforce energy conservation or decay. 

With these objects, we 
set 
\[
\bL^2(\mathcal G):=\bigoplus_{\me\in\mE} L^2(0,\ell_\me)^{k_\me}\qquad\hbox{and}\qquad Y^{(d)}:=\bigoplus\limits_{\mv\in \mV} Y^{(d)}_\mv
\]
and introduce the Hilbert space
\[
{\bL}^2_{d}(\mathcal G):=\bL^2(\mathcal G)\oplus Y^{(d)},
\]
equipped with the inner product
\begin{equation}\label{prod-d}
\begin{split}
\left(\begin{pmatrix} u \\ \mmx \end{pmatrix}, \begin{pmatrix} v \\ \mmy \end{pmatrix}\right)_d
&:= \sum_{\me\in\mE}\int_0^{\ell_\me} Q_\me(x)u_\me(x)\cdot\overline{v}_\me(x)\ dx+\sum_{\mv\in \mV} Q_\mv \mmx_\mv\cdot \bar \mmy_\mv,
\qquad u,v\in \bL^2(\mathcal G),\ \mmx,\mmy\in{Y^{(d)}},
\end{split}
\end{equation}
which is equivalent to the canonical one.
This is the function space setup we are going to use to deal with dynamic boundary conditions.

We stress that we are not assuming $B_\mv$ to be surjective, hence 
$\Ran B_\mv$ does not need to agree with $Y^{(d)}_\mv$. Accordingly, we split up
$Y^{(d)}_\mv$ as 
\begin{equation}\label{eq:Yd-split}
Y^{(d)}_\mv=\Ran B_\mv\oplus \Ker B_\mv^\ast,
\end{equation}
where the sum is orthogonal with respect to the   inner product of
$Y^{(d)}_\mv$ induced by the Euclidean inner product of $\C^{k_\mv}$.
We shall denote by $P_\mv^{(d)}$ (resp., $P_\mv^{(d,0)}$) the orthogonal projector of $\C^{k_\mv}$ onto $Y^{(d)}_\mv$ (resp.  of $Y^{(d)}_\mv$ onto 
 $\Ker B_\mv^\ast$), of course with respect to said inner product. In the same spirit, if
 $U$ is a vector space included into $\C^{k_\mv}$ (resp. $Y_\mv$), we denote by $U^{\perp}$ (resp. $U^{\perp_y}$) its orthogonal complement in $\C^{k_\mv}$ (resp.  $Y_\mv$) with respect to said inner product.

\section{Well-posedness of systems with dynamic vertex conditions}\label{sec:dynam}

Inspired by the discussion in \cite[\S~8.2]{Bec16}, where time-dependent transmission conditions for the 1D Maxwell's equation are derived by methods of asymptotic analysis,
we are going to introduce an abstract framework in order to investigate well-posedness of~\eqref{eq:max1} under general transmission conditions of dynamic type.

We first introduce 
 the linear and continuous operators ${\mathcal A}$ and 
$\mathcal{B}$ from 
\[
D_{\max}:= \bigoplus_{e\in \mE} H^1(0,\ell_\me)^{k_\me}
\]
to $\bL^2(\mathcal G)$ and $Y^{(d)}_\mv$, respectively,
by
\[
\begin{split}
({\mathcal A} u)_\me&:= M_\me u'_\me+ N_\me u_\me , \quad \me \in \mE,
\\
(\mathcal{B} u)_\mv&:= B_\mv \gamma_\mv(u), \quad \mmv\in \mV,
\end{split}
\]
as well as the operator $\mathcal C$ on $Y^{(d)}$ 
defined by
\[
(\mathcal C\mmx)_\mv :=C_\mv \mmx_\mv,\quad \mv\in\mV,
\]
and study the operator
\begin{eqnarray}\label{eq:opA}
{\mathbb A}:=\begin{pmatrix}
\mathcal A & 0\\
\mathcal{B} & { \mathcal C}
\end{pmatrix},
\end{eqnarray}
with domain
\begin{eqnarray}\label{eq:domA}
 D(\mathbb A):=\left\{\begin{pmatrix} u \\ \mmx \end{pmatrix}\in D_{\max}\oplus Y^{(d)}: 
\gamma_\mv( u )\in Y_\mv\hbox{ and }\mmx_\mv=P^{(d)}_\mv \gamma_\mv( u ) \hbox{ for all } \mv\in \mV \right\}.
\end{eqnarray}
The present setting is a strict generalization of the context discussed in our previous investigation~\cite{KraMugNic20}, where 
for all $\mv\in\mV$ we take $Y^{(d)}_\mv=\Ker B_\mv^\ast  = \{0\}$, $\Ker B_\mv=Y_\mv$.
In our main well-posedness results there -- \cite[Thm.~3.7 and Thm.~4.1]{KraMugNic20} -- we had to assume each $Y_\mv$ to be a subspace of the null or nonpositive isotropic cone of the quadratic form
\begin{equation}\label{eq:qv-def} 
q_\mv(\xi ):=T_\mv \xi \cdot \bar \xi,\qquad \xi\in \C^{k_\mv},
\end{equation}
i.e., $q_\mv(\xi)$ to be identically zero or nonpositive for all $\xi\in Y_\mv$ and all $\mv\in \mV$ (see~\cite[App.~C]{KraMugNic20} for more details), 
in order to control the boundary terms that arise from integration by parts when checking dissipativity of the relevant operator $\mathcal A$. In the present context, these conditions  have to be adapted. 
More precisely, the definition of $\mathbb A$ and computations analogous to those at the beginning of~\cite[\S3]{KraMugNic20}
show that for any ${\mathbb u} :={u \choose \mmx} \in D(\mathbb A)$,
\beq\label{eq:Adissdyn}
\begin{split}
\Re\left(\mathbb A {\mathbb u}, {\mathbb u}\right)_d
&=
 \Re\sum_{\me\in\mE}\int_0^{\ell_\me}
\left (Q_\me N_\me u_\me \cdot \bar u_\me \right)\, dx
-\frac{1}{2}\sum_{\me\in\mE}\int_0^{\ell_\me}
\left(Q_\me M_\me \right)' u_\me \cdot 
 \bar u_\me \,dx 
 \\
&\quad
+\frac{1}{2} \sum_{\mv\in\mV} T_\mv \gamma_\mv(u)\cdot\gamma_\mv(\bar u)
\\
&\quad
+ \Re \sum_{\mv\in\mV} \left(
Q_\mv \left(B_\mv +C_\mv P^{(d)}_\mv\right)\gamma_\mv(u) \cdot P^{(d)}_\mv \gamma_\mv(\bar u)
\right).
\end{split}
\eeq
Rearranging the terms and using the fact that
\begin{equation}\label{eq:QvYd}
 Q_\mv B_\mv \gamma_\mv(u)\cdot P^{(d)}_\mv\gamma_\mv(\bar u) = P^{(d)}_\mv Q_\mv B_\mv \gamma_\mv(u)\cdot \gamma_\mv(\bar u)  =Q_\mv B_\mv \gamma_\mv(u)\cdot\gamma_\mv(\bar u), 
 \end{equation}
 since $Q_\mv$ maps to $Y^{(d)}$,
we obtain
\beq\label{eq:Adissdyn-2}
\begin{split}
\Re\left(\mathbb A {\mathbb u}, {\mathbb u}\right)_d
&=
\frac{1}{2}\sum_{\me\in\mE}\int_0^{\ell_\me}
\left(Q_\me N_\me+N_\me^\ast Q_\me-(Q_\me M_\me \right)') u_\me \cdot 
 \bar u_\me \,dx 
 \\ 
 &\quad + \frac12 \sum_{\mv\in\mV} (Q_\mv C_\mv+ C^\ast_\mv Q_\mv )\mmx_\mv \cdot \bar \mmx_\mv 
+\frac{1}{2} \sum_{\mv\in\mV} (T_\mv +Q_\mv B_\mv+B_\mv^\ast Q_\mv) \gamma_\mv(u)\cdot\gamma_\mv(\bar u).
\end{split}
\eeq
We hence have two boundary terms: in $Y^{(d)}_\mv$ and in  the whole $Y_\mv$, respectively.

As in\cite[\S3]{KraMugNic20}, the maximality property of $\pm \mathbb A$ relies on a basis property of some specific vectors of $\mathbb{C}^k$. 
We first need to introduce some notations: we 
write
$I_\mv:=\{1,2,\ldots, {\dim Y_\mv^{\perp}}\}$, $J_\mv^{(R)}:=\{1,2,\ldots, \dim \Ran B_\mv\}$, $J_\mv^{(K)}:=\{1,2,\ldots, \dim \Ker B_\mv^\ast\}$ and fix bases $\{\mw^{(\mv, i)}\}_{i\in I_\mv}$, $\{\my^{(\mv, j)}\}_{j\in J_\mv^{(R)}}$, $\{ \mw_{KB^\ast}^{(\mv, l)}\}_{l\in J_\mv^{(K)}}$ of the subspaces
 $Y_\mv^{\perp}$, $\Ran B_\mv$, and $\Ker B_\mv^\ast$, respectively. Furthermore, let 
$
\mw_{RB^\ast}^{(\mv, j)}:=B_\mv^{\ast}  \my^{(\mv, j)}, j\in J_\mv^{(R)}.
$
Note that
\begin{equation}\label{eq:ranB*}
\lin \{{\mw_{RB^\ast}^{(\mv, j)}}\}_{{j\in J_\mv^{(R)}}}  =\Ran B_\mv^\ast  \subset Y_\mv
\end{equation}
and $\dim \Ran B_\mv = \dim \Ran B_\mv^\ast$.
Finally, we introduce the space
\begin{equation} \label{defZmv}
Z_\mv:={Y_\mv^{\perp} \oplus\left(  \Ran B_\mv^\ast  + \Ker B_\mv^\ast\right) }\subset \C^{k_\mv}
\end{equation}
which is spanned by the set of vectors
\begin{equation} \label{Zmv-span}
\mathcal{W}_\mv:=\{\mw^{(\mv, i)} : i\in I_\mv\} \cup\{ {\mw_{RB^\ast}^{(\mv, j)}} : j\in  J_\mv^{(R)}\}\cup\{\mw_{KB^\ast}^{(\mv, l)} : l\in J_\mv^{(K)} \}.
\end{equation}
 The choice of this space is guided by the proof of the maximality of the operator $\mathbb A$, see the proof of \autoref{prop:main1dyn}.

Any element $\mw\in \C^{k_\mv}$ can be identified with a vector 
$(\mw_\me)_{\me\in \mE_\mv}$ and we denote by 
$\widetilde \mw\in\C^k$ its extension to the whole set of edges, namely,
\be\label{eq:wtilde}
\widetilde \mw_\me :=
\left\{
\begin{array}{ll}
\mw_\me, &\hbox{ if } \me\in \mE_\mv,\\
0, &\hbox{ else. }
\end{array}
\right.
\ee
In the same way each coordinate of an element of {a subset} $U\subset\C^{k_\mv}$ corresponds to some $\me\in \mE_{\mv}$ and, as above, we can extend these sets to $\C^k$ by setting a $0$ in each coordinate corresponding to $\me$ whenever $\me\notin \mE_\mv$. We denote these extensions by $\widetilde{U}\subset\C^k$. 
Using this notation we will assume that 
\begin{equation}\label{eq:basis}
\text{the set  }\widetilde{\mathcal{W}}:=\bigcup_{\mv\in\mV} \widetilde{\mathcal{W}}_\mv \text{ is a basis of }\C^{k}.
\end{equation}

\begin{rem}\label{rem:B-surj}
Let us mention two special cases  when condition \eqref{eq:basis} can be reformulated in terms of dimension  equation. \\
(1)
First, note that in the case of only stationary boundary conditions -- i.e., when $Y^{(d)}_\mv=\{0\}$ and hence $\Ran B_\mv = \Ran B_\mv ^\ast = \Ker B_\mv^\ast = \{0\}$ and $\Ker B_\mv=Y_\mv$ -- we have $J_\mv^{(R)}=J^{(K)}_\mv=\emptyset$ and $Z_\mv=Y_\mv^{\perp}$.
By  \cite[Lemma~3.5]{KraMugNic20},  the set $\widetilde{\mathcal{W}}=\{\widetilde \mw^{(\mv, i)}\}_{i\in I_\mv, \mv\in \mV}$ is a basis of $\C^{k}$
 if and only if
{\[
 \dim \sum_{\mv\in \mV} \widetilde {Y_{\mv}^\perp}= k = \sum_{\mv\in \mV}  \dim Y_\mv.
\]}
\\
(2) Let us now more generally  consider the case of dynamic boundary conditions with surjective operator $B_\mv$. Then $Z_\mv$ reduces to the direct sum
\begin{equation} \label{defZmvBsurjective}
Z_\mv:=  Y_\mv^{\perp} \oplus \Ran B_\mv^\ast
\end{equation}
and $J^{(K)}_\mv=\emptyset$. In this case, $\widetilde{\mathcal{W}}_\mv$ is a basis of $ \widetilde Z_\mv$ and, by the same reasoning as in the proof of \cite[Lemma~3.5]{KraMugNic20} we see that \eqref{eq:basis} holds if and only if 
{\[
 \dim \sum_{\mv\in \mV} \widetilde {Z}_{\mv} = k= \sum_{\mv\in \mV}  \dim Z^\perp_\mv.
\]}
Observe that $Z^\perp_\mv = Y_\mv \cap (\Ran B^\ast_\mv)^\perp = \Ker B_\mv$. By the surjectivity of $B_\mv$ we further have $\dim\Ker B_\mv = \dim Y_\mv - \dim Y_\mv^{(d)}$ and thus \eqref{eq:basis} is equivalent to 
\begin{equation}\label{eq:basis-surj}
{
 \dim \sum_{\mv\in \mV} \widetilde {Z}_{\mv} = k= \sum_{\mv\in \mV}  \left(  \dim Y_\mv - \dim Y_\mv^{(d)} \right).
}\end{equation}
\end{rem}

\begin{rem}\label{rem:comp-par-hyp}
 Let us reverse our perspective and assume that we are interested in deriving new well-posed systems from known ones, rather than modelling problems with dynamic conditions stemming from applications; this is similar to the goal of extension theory in mathematical physics, where one is interested of describing as many realizations of a given Hamiltonian as possible, subject to the condition that such realizations are still governing a well-behaved PDE.
The condition in~\eqref{eq:basis-surj} shows that, in spite of superficial similarities, the present situation is different from that discussed in~\cite{GreMug20b} in the context of parabolic equations. Roughly speaking, the findings in~\cite{GreMug20b} show that, as soon a choice of a family of spaces $Y_\mv$, $\mv\in\mV$, define boundary conditions leading to well-posedness, each choice of subspaces $Y^{(d)}_\mv$ of $Y_\mv$, $\mv\in\mV$, will lead to a new well-posed system. As a matter of fact, modifying a well-posed hyperbolic system in order to allow for dynamic vertex conditions is a delicate issue: we will see in \autoref{sec:examples} that, starting from any well-posed hyperbolic system (say, taken from~\cite[\S~5]{KraMugNic20}) driven by the operator $\mathcal A$
with stationary conditions
\[
\gamma_\mv(u)\in Y^{(0)}_\mv
\]
 encoded in a space $Y^{(0)}_\mv$, switching to a dynamic setting requires to carefully enlarge  these spaces to find suitable $Y_\mv$ and at the same time allow for non-trivial $Y^{(d)}_\mv$, if we want \eqref{eq:basis-surj} to be satisfied. 
\end{rem} 

Next results extend \cite[Thm~3.7 and Thm.~4.1]{KraMugNic20} to the case where both dynamic and stationary conditions are allowed. We adopt the terminology of~\cite[Appendix~C]{KraMugNic20}.
Extending the statement to the case of $\lambda\ne 0$ might look superfluous, but it will prove useful when discussing concrete systems of PDEs, cf.\ Section~\ref{sec:secsound}.

\begin{theo}\label{prop:main1dyn}
For all $\mv\in \mV$, let~\eqref{eq:basis} hold and let moreover  $Y_\mv$ be a subspace of the nonpositive isotropic cone of the quadratic form associated with $T_\mv +Q_\mv B_\mv+B_\mv^\ast Q_\mv -\lambda P_\mv^{(d)} Q_\mv  P_\mv^{(d)}$ for some $\lambda\geq 0$.
Then $\mathbb A$ generates a strongly continuous semigroup on $\bL^2_d(\mathcal G)$. 
\end{theo}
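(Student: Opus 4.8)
The plan is to apply the Lumer–Phillips theorem, so I must show two things: that $\mathbb A - \omega$ is dissipative on $\bL^2_d(\mathcal G)$ for a suitable $\omega \in \R$, and that $\Ran(\mu - \mathbb A) = \bL^2_d(\mathcal G)$ for some (equivalently all large) $\mu > \omega$. Dissipativity is the routine half. Starting from the identity~\eqref{eq:Adissdyn-2}, the first integral term is bounded by $\omega_0 \|u\|^2$ for some constant $\omega_0$ depending only on $\|Q_\me N_\me + N_\me^\ast Q_\me - (Q_\me M_\me)'\|_\infty$ and the lower bound $q$ on $Q_\me$; this is exactly as in~\cite[\S3]{KraMugNic20}. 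For the vertex term in $Y^{(d)}_\mv$ one absorbs $\tfrac12(Q_\mv C_\mv + C_\mv^\ast Q_\mv)$ into the constant as well, using that $Q_\mv$ is positive definite on the finite-dimensional space $Y^{(d)}_\mv$, at the cost of a $+\tfrac{\lambda}{2}\sum_\mv Q_\mv \mmx_\mv\cdot\bar\mmx_\mv$ contribution (this is where the parameter $\lambda \ge 0$ enters). Since $\mmx_\mv = P^{(d)}_\mv \gamma_\mv(u)$ on $D(\mathbb A)$, that extra term is precisely $\tfrac{\lambda}{2}\sum_\mv P^{(d)}_\mv Q_\mv P^{(d)}_\mv \gamma_\mv(u)\cdot\gamma_\mv(\bar u)$, which combines with the last term of~\eqref{eq:Adissdyn-2} to give $\tfrac12\sum_\mv \bigl(T_\mv + Q_\mv B_\mv + B_\mv^\ast Q_\mv - \lambda P^{(d)}_\mv Q_\mv P^{(d)}_\mv\bigr)\gamma_\mv(u)\cdot\gamma_\mv(\bar u)$. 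By hypothesis $Y_\mv$ lies in the nonpositive isotropic cone of this form and $\gamma_\mv(u)\in Y_\mv$ for $\mathbb u \in D(\mathbb A)$, so this term is $\le 0$. Hence $\Re(\mathbb A \mathbb u, \mathbb u)_d \le \omega \|\mathbb u\|_d^2$ with $\omega := \max(\omega_0, \lambda/2 + C_{\mathcal C})$, i.e.\ $\mathbb A - \omega$ is dissipative.

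The maximality (range) condition is the main obstacle and the core of the argument. Given $\binom{f}{\mmg} \in \bL^2_d(\mathcal G)$ and $\mu > \omega$, I must solve $(\mu - \mathbb A)\binom{u}{\mmx} = \binom{f}{\mmg}$, i.e.\ $\mu u - \mathcal A u = f$ on the edges and $\mu \mmx - \mathcal B u - \mathcal C \mmx = \mmg$ in the vertices, subject to $\gamma_\mv(u) \in Y_\mv$ and $\mmx_\mv = P^{(d)}_\mv\gamma_\mv(u)$. Substituting the constraint, the vertex equations become $(\mu - C_\mv)P^{(d)}_\mv\gamma_\mv(u) - B_\mv \gamma_\mv(u) = \mmg_\mv$; enlarging $\mu$ if necessary makes $\mu - C_\mv$ invertible on $Y^{(d)}_\mv$, so these equations read $P^{(d)}_\mv\gamma_\mv(u) = (\mu - C_\mv)^{-1}\bigl(\mmg_\mv + B_\mv\gamma_\mv(u)\bigr)$. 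I would proceed exactly as in the proof of~\cite[Thm.~3.7]{KraMugNic20}: the ODE $\mu u_\me - M_\me u_\me' - N_\me u_\me = f_\me$ on each edge has, for each prescribed boundary value, a unique $H^1$ solution via the fundamental matrix / variation-of-constants, and the solution depends linearly and continuously on $(f_\me, \text{boundary data})$. The whole problem thus reduces to a \emph{finite-dimensional} linear system for the vector of traces $(\gamma_\mv(u))_{\mv\in\mV} \in \C^k$ (more precisely, for its components along suitable directions), and the key point is to show this system is solvable. This is where condition~\eqref{eq:basis} is used: the basis property of $\widetilde{\mathcal W} = \bigcup_\mv \widetilde{\mathcal W}_\mv$ guarantees that the constraints "$\gamma_\mv(u)\in Y_\mv$" (equivalently, $\gamma_\mv(u)\perp Y_\mv^\perp$, handled by the $\mw^{(\mv,i)}$), together with the vertex equations decomposed along $\Ran B_\mv^\ast$ (the $\mw_{RB^\ast}^{(\mv,j)}$ directions, carrying the $B_\mv$-part of the equation) and along $\Ker B_\mv^\ast$ (the $\mw_{KB^\ast}^{(\mv,l)}$ directions, where $P^{(d)}_\mv\gamma_\mv(u)$ is determined purely by $\mmg_\mv$ since $B_\mv$ contributes nothing), constitute exactly $k$ independent conditions on the $k$-dimensional unknown. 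One then checks that for $\mu$ large the perturbation coming from $f$, from $N_\me$, from $C_\mv$, and from the edge-to-vertex coupling is a small/invertible perturbation of the decoupled problem, so the $k\times k$ system is invertible; injectivity follows from dissipativity. I expect the bookkeeping of how~\eqref{eq:Yd-split} splits the vertex equations, and verifying that the count matches~\eqref{eq:2k} and the dimensions in $\mathcal W_\mv$, to be the delicate part.

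Finally I would assemble: for $\mu > \omega$ large enough, $(\mu - \mathbb A)$ is bijective with bounded inverse (boundedness of the solution operator follows from the $H^1$-estimates on the ODEs plus the invertibility of the finite-dimensional system, or alternatively from dissipativity once surjectivity is known), and together with the dissipativity of $\mathbb A - \omega$ and the density of $D(\mathbb A)$ in $\bL^2_d(\mathcal G)$ (immediate, since $D_{\max}$ is dense in $\bL^2(\mathcal G)$, each $Y^{(d)}_\mv$ is finite-dimensional, and the constraint $\mmx_\mv = P^{(d)}_\mv\gamma_\mv(u)$ together with $\gamma_\mv(u)\in Y_\mv$ can be met on a dense set — note $\gamma_\mv$ is surjective onto $\C^{k_\mv}$ restricted to $H^1$), the Lumer–Phillips theorem yields that $\mathbb A$ generates a strongly continuous (indeed quasi-contractive, with bound $e^{\omega t}$ in the norm $(\cdot,\cdot)_d$) semigroup on $\bL^2_d(\mathcal G)$. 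Since $(\cdot,\cdot)_d$ is equivalent to the canonical inner product, the conclusion follows.
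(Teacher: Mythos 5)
Your proof is correct and follows the same Lumer--Phillips skeleton as the paper (quasi-dissipativity read off from~\eqref{eq:Adissdyn-2} plus the isotropic-cone hypothesis, then a reduction of the range condition to a $k\times k$ linear system for the edge traces), but the maximality step is handled by a genuinely different mechanism. The paper first strips off $N_\me$ and $\mathcal C$ as a bounded perturbation (replacing $\mathcal C$ by $-\mathcal P^{(d,0)}$) and then inverts the simplified operator $\mathbb A_0$ at the spectral point $0$: there the edge ODE is just $M_\me u_\me'=f_\me$, the general solution is an explicit constant $K_\me$ plus a particular integral, and the resulting $k\times k$ trace system has as coefficient matrix precisely the matrix of the vectors in $\widetilde{\mathcal W}$ --- so~\eqref{eq:basis} is used in full (linear independence included) to invert it outright, with no perturbation or injectivity argument needed. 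You instead solve $(\mu-\mathbb A)\mathbb u=\mathbb f$ for large $\mu$ and obtain invertibility of the square trace system from its injectivity, which you deduce from dissipativity of $\mathbb A-\omega$. That argument is sound (a kernel vector of the homogeneous trace system produces an $H^1$ element of $\Ker(\mu-\mathbb A)$, which must vanish for $\mu>\omega$), and it is worth noting that it only uses the \emph{cardinality} statement $|\widetilde{\mathcal W}|=\sum_{\mv}(\dim Y_\mv^\perp+\dim Y_\mv^{(d)})=k$ rather than the linear independence in~\eqref{eq:basis}; this is consistent with the paper's later observation (Section~\ref{sec:dirac}) that~\eqref{eq:basis} is sufficient but not necessary, and in that sense your route proves slightly more. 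The one sentence you should delete is the claim that for large $\mu$ the system is a ``small perturbation of the decoupled problem'': the fundamental matrix of $\mu u_\me=M_\me u_\me'+N_\me u_\me$ contains modes behaving like $e^{\mu M_\me^{-1}x}$, so the trace matrix at large $\mu$ is \emph{not} close to the one at $\mu=0$, and a Neumann-series argument would fail; fortunately your injectivity argument makes that claim superfluous. Also be a little careful with the bookkeeping you flag yourself: for general $C_\mv$ the component of the vertex equation along $\Ker B_\mv^\ast$ is not determined by $\mmg_\mv$ alone (since $C_\mv$ may mix $\Ran B_\mv$ and $\Ker B_\mv^\ast$), but this does not affect the count of $\dim Y^{(d)}_\mv$ scalar conditions per vertex, which is all the squareness argument requires.
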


\begin{proof} First of all, let us observe that $\mathbb A$ is densely defined by~\cite[Lemma~5.6]{MugRom07}.
As the operator $(u,\mmx)^\top \mapsto 
(Nu , C\mmx + P_\mv^{(d,0)})^\top$ is a bounded perturbation of $\mathbb A$, the claim will follow if we can prove that the operator matrix
\[
\mathbb A_0:=\begin{pmatrix}
M\frac{d}{dx} & 0\\ 
\mathcal{B} & -\mathcal{P}^{(d,0)}
\end{pmatrix},\qquad D(\mathbb A_0):=D(\mathbb A),
\]
with $( {\mathcal P}^{(d,0)}\mmx)_\mv = P^{(d,0)}_\mv \mmx_\mv$, 
that corresponds to $\mathbb A$ with the choice $N=0$ and  ${\mathcal{C}=- \mathcal{P}^{(d,0)}}$,
is $m$-quasidissipative.

Formula~\eqref{eq:Adissdyn-2} 
and the assumptions on matrices $Q_\me$ and $M_\me$
show that dissipativity
holds for $\mathbb A_0-\lambda \mathbb{I}$ on $D(\mathbb A)$; let us check maximality. 

{To this aim, for any ${f}\in \bL^2(\mathcal G)$ and any $\mg\in Y^{(d)}$, we first look for a solution ${\mathbb u}:=(u,\mmx)^\top \in D(\mathbb A)$ of
\[
\mathbb A_0 (u,\mmx)^\top= ({f},\mg)^\top,
\]
namely solution of
\[
M_\me(x)u'_\me(x)={f}_\me(x)\quad \hbox{for $x\in (0,\ell_\me)$ and all } \me\in \mE,
\]
and of
\begin{equation}\label{eq:pbinBv}
B_\mv \gamma_\mv( u )-P_\mv^{(d,0)} x_\mv=\mg_\mv \quad\text{for all }\mv\in \mV.
\end{equation}
Such a solution is given by
\[
u_\me(x)=K_\me+u_\me^{\rm nh}(x)\quad \hbox{for all } x\in [0,\ell_\me],
\me\in \mE,
\]
with $K_\me\in \C^{k_\me}$ and where
\[u_\me^{\rm nh}(x)=
\int_0^xM^{-1}_\me(y)\mf_\me(y)\,dy\quad
\hbox{for all } x\in [0,\ell_\me], \me\in \mE.
\]
It remains to fix the vectors $K_\me$. For that purpose, we recall (see \cite[\S3]{KraMugNic20}) that the condition $\gamma_\mv(\mmu)\in Y_\mv$ at any vertex
$\mv\in \mV$ is equivalent to
\be\label{eq:mainbcequiv2nonh}
(K_\me)_{\me\in \mE}\cdot \overline{\widetilde \mw^{(\mv, i)}}=-(u_\me^{\rm nh}(\mv))_{\me\in \mE}\cdot \overline{\widetilde \mw^{(\mv, i)}}
\quad \hbox{for all } i\in I_\mv.
\ee
On the other hand, problem \eqref{eq:pbinBv} is {by \eqref{eq:Yd-split} and the definition of basis,} equivalent to 
\begin{eqnarray*}
B_\mv \gamma_\mv( u )\cdot {\overline{ \my^{(\mv, j)}}}&=&\mg_\mv\cdot  {\overline{ \my^{(\mv, j)}}} \quad\text{for all } j\in  {J_\mv^{(R)}}, \mv\in \mV,\\
- P_\mv^{(d,0)}\gamma_\mv( u )\cdot  {\overline{\mw_{KB^\ast}^{(\mv, l)}}}&=&\mg_\mv\cdot  {\overline{ \mw_{KB^\ast}^{(\mv, l)}}} \quad\text{for all } l\in J_\mv^{(K)}, \mv\in \mV,
\end{eqnarray*}
and hence to}
\begin{eqnarray}\label{eq:pbinBvequiv}
(K_\me)_{\me\in \mE}\cdot \overline{{\widetilde \mw_{RB^\ast}^{(\mv, j)}}}&=&\mg_\mv\cdot  {\overline{ \widetilde\my^{(\mv, j)}}}
-(u_\me^{\rm nh}(\mv))_{\me\in \mE}\cdot \overline{{\widetilde \mw_{RB^\ast}^{(\mv, j)}}} \quad \text{for all } j\in  {J_\mv^{(R)}}, \mv\in \mV,
\\
 (K_\me)_{\me\in \mE}\cdot  {\overline{{\widetilde  \mw_{KB^\ast}^{(\mv, l)}}}}&=&- \mg_\mv\cdot  {\overline{{\widetilde \mw_{KB^\ast}^{(\mv, l)}}}}
 -(u_\me^{\rm nh}(\mv))_{\me\in \mE}\cdot  {\overline{ {\widetilde \mw_{KB^\ast}^{(\mv, l)}}} }\quad\text{for all } l\in J_\mv^{(K)}, \mv\in \mV. \label{eq:pbinBvequiv2}
\end{eqnarray}
{By \eqref{eq:basis} it follows} that \eqref{eq:mainbcequiv2nonh}-\eqref{eq:pbinBvequiv}-\eqref{eq:pbinBvequiv2} is a 
{$k\times k$ linear system in $(K_\me)_{\me\in \mE}$ that has a unique solution.
This shows that the operator
$\mathbb A_0$ 
is an isomorphism from $D(\mathbb A)$ into ${\bL}^2_{d}(\mathcal G)$ and, in particular, it is closed.
Hence, by dissipativity of $\mathbb A_0-\lambda {\mathbb I}$, it is also quasi-m-dissipative. We conclude that $\mathbb A_0$, and hence also $\mathbb A$, generate strongly continuous semigroup on $\bL_d^2(\mathcal G)$.}
 \end{proof}

Repeating the same argument for $-\mathbb A$ yields the following.

\begin{cor}\label{cor:group}
For all $\mv\in \mV$, let~\eqref{eq:basis} hold and let moreover $Y_\mv$ be a subspace of the null isotropic cone of the quadratic form associated with $T_\mv +Q_\mv B_\mv+B_\mv^\ast Q_\mv$. 
 Then $\mathbb A$ generates a strongly continuous group on $\bL^2_d(\mathcal G)$.
\end{cor}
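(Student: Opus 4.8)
The plan is to deduce Corollary~\ref{cor:group} from \autoref{prop:main1dyn} by applying the latter twice: once to $\mathbb A$ and once to $-\mathbb A$, and then invoking the standard fact that an operator generating a $C_0$-semigroup whose negative also generates a $C_0$-semigroup is the generator of a $C_0$-group. The key observation making this work is that the hypothesis of \autoref{prop:main1dyn} with $\lambda=0$ applied to both $\mathbb A$ and $-\mathbb A$ is exactly the hypothesis of Corollary~\ref{cor:group}.

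Concretely, first I would note that the null isotropic cone of a quadratic form $q$ is precisely the set on which both $q$ and $-q$ are nonpositive; hence, if $Y_\mv$ is contained in the null isotropic cone of the form associated with $S_\mv := T_\mv + Q_\mv B_\mv + B_\mv^\ast Q_\mv$, then $Y_\mv$ is simultaneously a subspace of the nonpositive isotropic cone of the form associated with $S_\mv$ (this is the $\lambda=0$ case of the hypothesis of \autoref{prop:main1dyn} for $\mathbb A$) and of the form associated with $-S_\mv$. Second, I would observe that $-\mathbb A$ has exactly the same block structure as $\mathbb A$, with $(\mathcal A, \mathcal B, \mathcal C)$ replaced by $(-\mathcal A, -\mathcal B, -\mathcal C)$, hence in particular $M_\me$ is replaced by $-M_\me$ and $B_\mv$ by $-B_\mv$; the matrices $Q_\me$ (which symmetrize $-M_\me$ as well, since $Q_\me(-M_\me)$ is still Hermitian) and $Q_\mv$ may be kept unchanged. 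One checks that $\Ran(-B_\mv) = \Ran B_\mv$, $\Ker(-B_\mv)^\ast = \Ker B_\mv^\ast$ and $\Ran(-B_\mv)^\ast = \Ran B_\mv^\ast$, so the spaces $Z_\mv$, $Y_\mv^\perp$ and the set $\widetilde{\mathcal W}$ are literally unchanged; thus condition~\eqref{eq:basis} holds for $-\mathbb A$ as well. Moreover the matrix $T_\mv$ built from $-M_\me$ equals $-T_\mv$, so the relevant quadratic form for the $-\mathbb A$ problem is the one associated with $-T_\mv + Q_\mv(-B_\mv) + (-B_\mv)^\ast Q_\mv - 0 = -S_\mv$, and $Y_\mv$ lies in its nonpositive isotropic cone by the first observation. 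Hence \autoref{prop:main1dyn} applies to $-\mathbb A$ with $\lambda=0$ and yields that $-\mathbb A$ generates a $C_0$-semigroup.

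Finally, since both $\mathbb A$ and $-\mathbb A$ generate $C_0$-semigroups on $\bL^2_d(\mathcal G)$, a classical result (e.g.\ the characterization of group generators, as in Engel--Nagel) shows that $\mathbb A$ generates a strongly continuous group. This concludes the proof.

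The main obstacle, such as it is, is purely bookkeeping: verifying carefully that passing from $\mathbb A$ to $-\mathbb A$ preserves all the structural ingredients entering \autoref{prop:main1dyn}. The only point requiring a moment's thought is the behaviour of the Friedrichs symmetrizer: one must check that \autoref{assum-MQN}(3) remains valid with $-M_\me$ in place of $M_\me$, which is immediate because $Q_\me M_\me$ Hermitian forces $Q_\me(-M_\me) = -(Q_\me M_\me)$ Hermitian, while positive definiteness of $Q_\me$ is untouched; similarly $Q_\mv$ stays hermitian positive definite. Everything else is a direct sign chase through the definitions of $T_\mv$, $Z_\mv$ and $\widetilde{\mathcal W}$.
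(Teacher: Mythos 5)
Your proposal is correct and follows exactly the paper's route: the paper's entire proof of this corollary is the remark ``Repeating the same argument for $-\mathbb A$ yields the following,'' i.e.\ apply \autoref{prop:main1dyn} (with $\lambda=0$) to both $\pm\mathbb A$ and invoke the standard group-generation criterion. Your sign-chase through \autoref{assum-MQN}, $T_\mv$, $B_\mv$, $Z_\mv$ and condition~\eqref{eq:basis} merely makes explicit the bookkeeping the authors leave implicit, and it checks out.
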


\begin{rem}
Because $\dim Y^{(d)}\le \dim Y\le 2k<\infty$, the compact embedding of each $H^1(0,\ell_\me)$ in $L^2(0,\ell_\me)$, and hence of $\bigoplus_{\me\in\mE}H^1(0,\ell_\me)$ in  $\bigoplus_{\me\in\mE}L^2(0,\ell_\me)$, directly implies that $\mathbb A$ has compact resolvent, regardless of the imposed transmission conditions at the vertices.
\end{rem}

\begin{rem}\label{rem-Z and global}
(1) Formula~\eqref{eq:Adissdyn-2} shows that, in order to obtain dissipativity (rather than mere \textit{quasi}-dissipativity) of $\mathbb A$ on $\bL^2_d(\mathcal G)$, hence generation of a \textit{contractive} semigroup, the assumptions of Theorem~\ref{prop:main1dyn} shall be complemented by the following:
\begin{itemize}
\item $Q_\me(x)N_\me(x)+N_\me(x)^\ast Q_\me(x)-(Q_\me M_\me)'(x)$ is negative semi-definite, for all $\me\in\mE$ and a.e.\ $x\in (0,\ell_\me)$; and 
\item $Y^{(d)}_\mv$ is  for all $\mv\in \mV$ a subspace of the negative isotropic cone of the quadratic form associated with $Q_\mv C_\mv+C^\ast_\mv Q_\mv$.
\end{itemize}
(2) If, additionally to the assumptions of \autoref{cor:group}, 
\begin{itemize}
\item $Q_\me(x)N_\me(x)+N_\me(x)^\ast Q_\me(x)=(Q_\me M_\me)'(x)$, for all $\me\in\mE$ and a.e.\ $x\in (0,\ell_\me)$; and
\item $Y^{(d)}_\mv$ is  for all $\mv\in \mV$ a subspace of the null isotropic cone of the quadratic form associated with $Q_\mv C_\mv+C^\ast_\mv Q_\mv$,
\end{itemize}
then $\mathbb A$ generates in fact a \textit{unitary} group on $\bL^2_d(\mathcal G)$. 

In both cases, the quadratic form on $Y^{(d)}_\mv$ is considered with respect to the Euclidean inner product.
Observe, however, that both contractivity and unitarity -- hence decay or conservation of (an appropriate notion of) energy -- hold of course, under the above assumptions,  with respect to the equivalent norm of $\bL^2(\mathcal G)\oplus Y^{(d)}$ defined in~\eqref{prod-d}, which depends on the matrices $Q_\me(x)$ and $Q_\mv$, $x\in (0,\ell_\me)$, $\me\in\mE$, $\mv\in \mV$. 
\end{rem}

{
It turns out that the condition~\eqref{eq:basis} is not satisfied in some relevant applications, see e.g.\ Section~\ref{sec:dirac}. We present a different approach that requires proving the dissipativeness of both $\mathbb A$ and its adjoint $\mathbb A^\ast$. To begin with, let us elaborate on some ideas presented in~\cite[\S3]{KraMugNic20} and describe $\mathbb A^\ast$.

\begin{lemma}\label{lem:adjoint}
The adjoint of the operator $\mathbb A$ is given by
 \begin{equation*}
\begin{split}
D({\mathbb A}^\ast)&=\left\{\begin{pmatrix} v \\ \mmy \end{pmatrix}\in {\bL}^2_{d}(\mathcal G):
v\in D_{\max} \hbox{ such that} \begin{pmatrix} \gamma_\mv(v) \\ \mmy_\mv \end{pmatrix}\in \mathbb Y^\ast_\mv \hbox{ for all }\mv\in \mV\right\},\\
{\mathbb A}^\ast&=\begin{pmatrix}
{\mathcal A}^\ast & 0\\
\widetilde{\mathcal{B}} & \widetilde { \mathcal C}
\end{pmatrix},
\end{split}
\end{equation*}
where
 \begin{equation*}
\begin{split}
({\mathcal A}^\ast v)_\me& :=- M_\me  v'_\me  -Q_\me^{-1}\left( Q_\me M_\me  \right)'   v_\me
+Q_\me^{-1}N_\me^\ast Q_\me v_\me,\qquad \me\in\mE,
\end{split}
\end{equation*}
and
\[
\begin{split}
(\widetilde{\mathcal{B}}  v)_\mv&:= Q_\mv^{-1}P_{\mv}^{(d)}  
T_\mv \gamma_\mv ( v), \quad \mmv\in \mV,
\\
(\widetilde{\mathcal{C}}  v)_\mv&:=Q_\mv^{-1}P_{\mv}^{(d)}  B_\mv^\ast    Q_\mv  \mmy_\mv  + Q_\mv^{-1} C_\mv^\ast  Q_\mv  \mmy_\mv, \quad \mmv\in \mV,
\end{split}
\]
and, finally, the subspace $\mathbb Y^\ast_\mv$  of $\C^{k_\mv}\oplus Y_\mv^{(d)}$ is defined by
\begin{equation}\label{eq:zv}
\mathbb Y^\ast_\mv:=\Ker \begin{pmatrix} P^{(d),\perp}_\mv T_\mv &  P^{(d),\perp}_\mv B^\ast_\mv Q_\mv\end{pmatrix},
\end{equation}
where $P_{\mv}^{(d),\perp}$ is the orthogonal projector onto $(Y_\mv^{(d)})^{\perp_y}$ with respect to the Euclidean inner product.\end{lemma}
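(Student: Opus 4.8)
The plan is to compute $\mathbb A^\ast$ directly from the defining relation $(\mathbb A \mathbb u, \mathbb v)_d = (\mathbb u, \mathbb A^\ast \mathbb v)_d$ for all $\mathbb u = (u,\mmx)^\top \in D(\mathbb A)$, exactly in the spirit of~\cite[\S3]{KraMugNic20}, keeping careful track of the $Q_\me$- and $Q_\mv$-weighted inner product~\eqref{prod-d}. First I would fix a candidate $\mathbb v = (v,\mmy)^\top$ with $v\in D_{\max}$ and write out
\[
(\mathbb A \mathbb u, \mathbb v)_d = \sum_{\me\in\mE}\int_0^{\ell_\me} Q_\me (M_\me u_\me' + N_\me u_\me)\cdot \bar v_\me\, dx + \sum_{\mv\in\mV} Q_\mv\big(B_\mv\gamma_\mv(u) + C_\mv \mmx_\mv\big)\cdot \bar\mmy_\mv,
\]
and then integrate by parts in the edge terms. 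Using that $Q_\me M_\me$ is Hermitian (Assumption~\ref{assum-MQN}\eqref{assumMe}), the term $\int Q_\me M_\me u_\me'\cdot \bar v_\me$ produces $-\int u_\me\cdot (Q_\me M_\me \bar v_\me)' = -\int Q_\me u_\me \cdot \overline{\big(M_\me v_\me' + Q_\me^{-1}(Q_\me M_\me)' v_\me\big)}$ up to the boundary term, and the $N_\me$ term contributes $\int Q_\me u_\me \cdot \overline{Q_\me^{-1} N_\me^\ast Q_\me v_\me}$. This already identifies $\mathcal A^\ast$ as stated. The boundary term coming out of the integration by parts is, collecting over vertices, $\sum_{\mv\in\mV} T_\mv \gamma_\mv(u)\cdot \overline{\gamma_\mv(v)}$ by the very definition~\eqref{eq:defTv} of $T_\mv$ and the incidence signs.

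Next I would combine this edge-boundary term with the genuine vertex term $\sum_{\mv} Q_\mv(B_\mv\gamma_\mv(u) + C_\mv\mmx_\mv)\cdot \bar\mmy_\mv$. On $D(\mathbb A)$ we have the constraint $\mmx_\mv = P^{(d)}_\mv\gamma_\mv(u)$ and $\gamma_\mv(u)\in Y_\mv$, so the total boundary contribution is a sesquilinear form in $\gamma_\mv(u)\in Y_\mv$ (the free variable, after we quotient out the constraint) pairing against $(\gamma_\mv(v),\mmy_\mv)$. I would rewrite it as $\sum_{\mv} \big( T_\mv\gamma_\mv(u)\cdot\overline{\gamma_\mv(v)} + B_\mv\gamma_\mv(u)\cdot \overline{Q_\mv\mmy_\mv} + P^{(d)}_\mv\gamma_\mv(u)\cdot \overline{C_\mv^\ast Q_\mv\mmy_\mv}\big)$, move the adjoints of $B_\mv$ and $P^{(d)}_\mv$ onto the $v$-side (the latter being self-adjoint), and arrive at $\sum_\mv \gamma_\mv(u)\cdot \overline{\big(T_\mv^\ast\gamma_\mv(v) + B_\mv^\ast Q_\mv\mmy_\mv + \text{(terms landing in }Y^{(d)}_\mv)\big)}$. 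The requirement that this equal $(\mmx, \widetilde{\mathcal B}v + \widetilde{\mathcal C}v)$-type terms plus something that must vanish \emph{for all} $\gamma_\mv(u)\in Y_\mv$ is exactly what forces the two conditions: on the component of the dual variable visible to $Y^{(d)}_\mv$ we read off $\widetilde{\mathcal B}$ and $\widetilde{\mathcal C}$ by applying $Q_\mv^{-1}P^{(d)}_\mv$ (to land back in $Y^{(d)}_\mv$, consistently with the second slot of $\bL^2_d(\mathcal G)$); and on the component orthogonal to $Y^{(d)}_\mv$, i.e.\ after applying $P^{(d),\perp}_\mv$, the pairing against arbitrary $\gamma_\mv(u)\in Y_\mv$ must vanish — but here I should be careful, since $\gamma_\mv(u)$ ranges only over $Y_\mv$, not all of $\C^{k_\mv}$, so the vanishing condition is $P^{(d),\perp}_\mv(T_\mv\gamma_\mv(v) + B_\mv^\ast Q_\mv\mmy_\mv)\perp_y Y_\mv$, equivalently its $Y_\mv$-orthogonal projection is zero; whether this collapses to the clean kernel condition~\eqref{eq:zv} will depend on how $Y_\mv$ interacts with $(Y^{(d)}_\mv)^{\perp_y}$, and I expect that the intended reading is that $\gamma_\mv(u)$ effectively ranges over enough of the relevant subspace — this is the delicate bookkeeping point.

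The main obstacle I anticipate is precisely this last step: disentangling which linear constraints on $(\gamma_\mv(v),\mmy_\mv)$ are genuinely forced. The subtlety is that $\gamma_\mv(u)$ is not free in $\C^{k_\mv}$ but constrained to $Y_\mv$, while simultaneously $\mmx_\mv$ is slaved to it via $P^{(d)}_\mv$; one must check that the map $\gamma_\mv(u)\mapsto (\gamma_\mv(u), P^{(d)}_\mv\gamma_\mv(u))$ is onto a subspace large enough that the boundary form's vanishing is equivalent to the stated kernel condition, and that no spurious extra constraints appear. I would handle this by a dimension count — comparing $\dim D(\mathbb A)$-boundary-data with $\dim D(\mathbb A^\ast)$-boundary-data and invoking the general fact that for a closed densely defined operator the boundary form is non-degenerate in the appropriate quotient — together with the explicit identification of $\mathbb Y^\ast_\mv$ in~\eqref{eq:zv}. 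Once the vertex algebra is pinned down, assembling $D(\mathbb A^\ast)$ and the formula for $\mathbb A^\ast$ is routine, and one double-checks consistency by verifying that in the purely stationary case $Y^{(d)}_\mv=\{0\}$ the lemma reduces to~\cite[\S3]{KraMugNic20}.
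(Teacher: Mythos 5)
Your plan is essentially the paper's proof: integrate by parts against elements of $D(\mathbb A)$, use the constraint $\mmx_\mv=P^{(d)}_\mv\gamma_\mv(u)$ together with the surjectivity of $\gamma_\mv$ onto $Y_\mv$ to reduce the boundary identity to $P_{Y_\mv}\bigl(T_\mv\gamma_\mv(v)+(B_\mv^\ast+C_\mv^\ast)Q_\mv\mmy_\mv-Q_\mv\mmh_\mv\bigr)=0$, and then split along $Y_\mv=Y^{(d)}_\mv\oplus(Y^{(d)}_\mv)^{\perp_y}$. The ``delicate bookkeeping point'' you flag does collapse cleanly, with no dimension count or nondegeneracy argument needed: writing $P_{Y_\mv}=P^{(d)}_\mv+P^{(d),\perp}_\mv$ and noting that $C_\mv^\ast Q_\mv\mmy_\mv-Q_\mv\mmh_\mv\in Y^{(d)}_\mv$ is annihilated by $P^{(d),\perp}_\mv$, the $(Y^{(d)}_\mv)^{\perp_y}$-component is exactly the kernel condition~\eqref{eq:zv} (the image of $P^{(d),\perp}_\mv$ lies inside $Y_\mv$, so being orthogonal to $Y_\mv$ forces it to vanish), while the $Y^{(d)}_\mv$-component imposes no constraint and merely defines $\mmh_\mv$. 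Two small steps you should not skip: the density of $D(\mathbb A)$, needed for $\mathbb A^\ast$ to exist (the paper proves it via surjectivity of the trace plus approximation by test functions), and the fact that an arbitrary $(v,\mmy)^\top\in D(\mathbb A^\ast)$ automatically satisfies $v\in D_{\max}$ --- obtained by first testing with $\mmx=0$ and $u_\me\in\mathcal D(0,\ell_\me)$ to get the distributional ODE, rather than assuming $v\in D_{\max}$ as a candidate from the outset.
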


\begin{proof}
 First we notice that $D(\mathbb A)$ is dense. Indeed, given $\begin{pmatrix} g \\ \mmh \end{pmatrix}\in {\bL}^2_{d}(\mathcal G)$, by the surjectivity of the trace mapping, there exists
 $u\in D_{\max}$
such that
\[
\mmh=P_\mv^{(d)} \gamma_\mv(u),
\]
and $\gamma_\mv(u)\in Y_\mv$, for all $\mv\in \mV$.
This in particular means that the pair $\begin{pmatrix} u \\ \mmh \end{pmatrix}\in D(\mathbb A)$.
Now,  since $g-u\in{\bL}^2(\mathcal G)$, there exists a sequence of elements 
$\varphi^{(n)}\in \bigoplus_{\me\in \mE} {\mathcal D}(0,\ell_\me)^{k_\me}$ such that
\[
\varphi^{(n)}\to g-u \hbox{ in } {\bL}^2(\mathcal G).
\]
Since $\begin{pmatrix} \varphi^{(n)} \\ 0\end{pmatrix}$ belongs trivially to $D(\mathbb A)$, we get that $\begin{pmatrix} u+\varphi^{(n)} \\ \mmh\end{pmatrix}$ belongs  to $D(\mathbb A)$
and satisfies
\[
\begin{pmatrix} u+\varphi^{(n)} \\ \mmh\end{pmatrix} \to \begin{pmatrix} g \\ \mmh\end{pmatrix}\hbox{ in } {\bL}^2_d(\mathcal G).
\]
 
By definition,
 $\begin{pmatrix} v \\ \mmy \end{pmatrix}\in {\bL}^2_{d}(\mathcal G)$ belongs to $D(\mathbb A^\ast)$
if and only if there exists 
$\begin{pmatrix} g \\ \mmh \end{pmatrix}\in {\bL}^2_{d}(\mathcal G)$ such that
\[
\left(\mathbb A  \begin{pmatrix} u \\ \mmx \end{pmatrix} , \begin{pmatrix} v \\ \mmy \end{pmatrix}\right)_d=\left(\begin{pmatrix} u \\ \mmx \end{pmatrix}, \begin{pmatrix} g \\ \mmh \end{pmatrix}\right)_d\quad\hbox{for all } \begin{pmatrix} u \\ \mmx \end{pmatrix}\in D(\mathbb A)
\]
and in such a case
\[
{\mathbb A}^\ast v=\begin{pmatrix} g \\ \mmh \end{pmatrix}.
\]

Taking first $ \mmx=0$ and $u_\me\in \mathcal{D}(0,\ell_\me)$ (which yields a pair
$\begin{pmatrix} u \\ \mmx \end{pmatrix}\in D(\mathbb A)$) we find  that
\begin{equation}\label{DEadjoint}
-Q_\me M_\me  v'_\me  -\left( Q_\me M_\me  \right)'   v_\me
+N_\me^\ast Q_\me v_\me=Q_\me g_\me 
\end{equation}
holds  in the distributional sense, hence $v$ belongs to $D_{\max}$.
We can thus apply  the  identity
\begin{equation}\label{eq:Auv}
\begin{split}
\left(\mathcal A u, v\right)
&=
\sum_{\me\in\mE}\int_0^{\ell_\me}
  u_\me \cdot 
\overline{\left(-Q_\me M_\me  v'_\me  -\left( Q_\me M_\me  \right)'   v_\me
+N_\me^\ast Q_\me v_\me\right)}  \,dx
  \\
&\qquad +
\sum_{\mv\in\mV}  T_\mv \gamma_\mv(u)\cdot\gamma_\mv (\bar v)
 \qquad \hbox{for all } u, v \in D_{\max}
\end{split}
\end{equation}
{(see the proof of \cite[Lem.~3.10]{KraMugNic20}).
By \eqref{DEadjoint}, 
 the definition of $\mathbb A$, and inner product \eqref{prod-d}, we obtain}
\[
\sum_{\mv\in\mV}  T_\mv \gamma_\mv(u)\cdot\gamma_\mv (\bar v)
+\sum_{\mv\in\mV} \left(
Q_\mv \left( {B_\mv \gamma_\mv(u) +C_\mv \mmx_\mv} \right) \cdot \bar \mmy_\mv
\right)
=\sum_{\mv\in\mV} 
Q_\mv \mmx_\mv\cdot \bar \mmh_\mv,  \qquad \hbox{for all } \begin{pmatrix} u \\ \mmx \end{pmatrix}\in D(\mathbb A).
\]
As $\mmx_\mv=P_\mv^{(d)}  \gamma_\mv(u)$, we further have
\[
\sum_{\mv\in\mV}  T_\mv \gamma_\mv(u)\cdot\gamma_\mv (\bar v)
+\sum_{\mv\in\mV} \left(
Q_\mv \left(B_\mv +C_\mv P^{(d)}_\mv\right)\gamma_\mv(u) \cdot  \bar \mmy_\mv
\right)
=\sum_{\mv\in\mV} 
Q_\mv P_\mv^{(d)}  \gamma_\mv(u)\cdot \bar \mmh_\mv,  \qquad \hbox{for all } { u\in D(\mathcal A),}
\]
that we write equivalently as
\[
\sum_{\mv\in\mV}  
 \gamma_\mv(u)\cdot \overline{\left(
T_\mv \gamma_\mv ( v)
+  \left(B_\mv^\ast  +  C_\mv^\ast\right)  Q_\mv   \mmy_\mv
- Q_\mv \mmh_\mv\right)}=0,  \qquad \hbox{for all }{ u\in D(\mathcal A).}
\]
By the surjectivity of the trace mapping, since $\gamma_\mv(u)\in Y_\mv$, we find that
\begin{equation}\label{eq:pr0}
P_{Y_\mv} \left(
T_\mv \gamma_\mv ( v)
+  \left(B_\mv^\ast  +  C_\mv^\ast\right)  Q_\mv  \mmy_\mv
- Q_\mv \mmh_\mv\right)=0,
\end{equation}
where $P_{Y_\mv}$ is the orthogonal projector on $Y_\mv$ with respect to the Euclidean inner product.

Since $Y_\mv=Y_\mv^{(d)}\oplus (Y_\mv^{(d)})^{\perp_y}$ (orthogonal sum), and since $C_\mv^\ast  Q_\mv  \mmy_\mv
- Q_\mv \mmh_\mv$ belongs to $Y_\mv^{(d)}$, \eqref{eq:pr0} is equivalent to
\begin{equation}\label{eq:bcA*}
P_{\mv}^{(d),\perp} \left(
T_\mv \gamma_\mv ( v)
+  B_\mv^\ast    Q_\mv  \mmy_\mv\right)=0,
\end{equation}
and
\begin{equation}\label{eq:defh}
P_{\mv}^{(d)} \left(
T_\mv \gamma_\mv ( v)
+  B_\mv^\ast    Q_\mv  \mmy_\mv\right) +  C_\mv^\ast  Q_\mv  \mmy_\mv
- Q_\mv \mmh_\mv=0.
\end{equation}
Finally, we notice that \eqref{eq:bcA*} means equivalently that $\begin{pmatrix} \gamma_\mv(v) \\ \mmy_\mv \end{pmatrix}\in \mathbb Y^\ast_\mv$.
On the other hand, \eqref{eq:defh} defines
$h_\mv$, namely, it is equivalent to
\[
\mmh_\mv=
Q_\mv^{-1}P_{\mv}^{(d)} \left(
T_\mv \gamma_\mv ( v)
+  B_\mv^\ast    Q_\mv  \mmy_\mv\right) + Q_\mv^{-1} C_\mv^\ast  Q_\mv  \mmy_\mv.
\]
This concludes the proof.
 \end{proof}

\begin{rem}
Observe that~\eqref{eq:bcA*} is a property similar to $\mmx=P_{\mv}^{(d)}  \gamma_\mv ( u)$ and 
to the boundary condition $\gamma_\mv ( u)\in Y_\mv$, since these two conditions can be compactly written
\begin{equation}\label{eq:comp-serge}
P_{\mv}^{(d),\perp\perp}(\gamma_\mv(u)-\mmx)=0,
\end{equation}
where $P_{\mv}^{(d),\perp\perp}$ means  the orthogonal projector on the orthogonal of $(Y_\mv^{(d)})^\perp$ in $\C^{k_\mv}$ (equal to $Y_\mv^{(d)}\oplus Y_\mv^\perp$)
with respect to the Euclidean inner product.
Indeed, \eqref{eq:comp-serge} means that
\[
\gamma_\mv(u)-\mmx\in (Y_\mv^{(d)})^\perp,
\]
or, equivalently,
\[
\gamma_\mv(u)=\mmx+\mmy
\]
with $\mmy\in
(Y_\mv^{(d)})^\perp$. This gives $\gamma_\mv(u)\in Y_\mv$
and taking the projection on $Y_\mv^{(d)}$ that $\mmx=P_{\mv}^{(d)}  \gamma_\mv ( u)$. 

If in particular $Y^{(d)}_\mv=\{0\}$ and hence $B^\ast_\mv=0$ and the range of $P^{(d),\perp}_\mv$ is $Y^\perp_\mv$, the assertion in \autoref{lem:adjoint} thus agrees with~\cite[Lemma~3.10]{KraMugNic20}.
\end{rem} 
We are finally in the position to propose a set of sufficient conditions for well-posedness different from those in \autoref{prop:main1dyn} and \autoref{cor:group}.

\begin{theo}\label{thm:new-for-dirac}
For all $\mv\in \mV$, let 
\begin{itemize}
\item $Y^{(d)}_\mv$ be a subspace of the nonpositive isotropic cone of the quadratic form on $Y^{(d)}_\mv$ associated with 
\[T_\mv +Q_\mv B_\mv+B_\mv^\ast Q_\mv -\lambda P_\mv^{(d)} Q_\mv  P_\mv^{(d)}\]
 for some $\lambda\geq 0$, and
\item $\mathbb Y^\ast_\mv$ as in~\eqref{eq:zv} be a subspace of the nonpositive isotropic cone (with respect to the Euclidean inner product in $\C^{k_\mv}\oplus Y^{(d)}_\mv$) of the quadratic form associated with
\begin{equation*}
\begin{pmatrix}
-T_\mv-2\mu & T_\mv \\
P_{\mv}^{(d)}T_\mv & (P_{\mv}^{(d)}B_\mv^\ast  -\mu \Id )  Q_\mv+Q_\mv( B_\mv -\mu \Id)
\end{pmatrix}
\end{equation*}
for some $\mu\ge 0$.
\end{itemize} 
Then $\mathbb A$ is a quasi-$m$-dissipative operator.
In particular, $\mathbb A$ generates a strongly continuous semigroup on $\bL^2_d(\mathcal G)$.
\end{theo}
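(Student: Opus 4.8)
The plan is to prove quasi-$m$-dissipativity of $\mathbb{A}$ by establishing two things: first, that $\mathbb{A}-\mu'\mathbb{I}$ is dissipative for a suitable $\mu'\ge 0$, and second, that $\mathbb{A}^\ast - \mu''\mathbb{I}$ is dissipative for a suitable $\mu''\ge 0$; then the standard result (see e.g.\ the characterization via the adjoint, as in Phillips' theorem) that a densely defined, closed operator whose adjoint is quasi-dissipative is itself quasi-$m$-dissipative will finish the argument. Density of $D(\mathbb{A})$ was already recorded in the proof of \autoref{lem:adjoint}, and $\mathbb{A}$ is closed since it has compact resolvent (or more directly since $\mathbb{A}^\ast$ is densely defined and $\mathbb{A}$ is closable with $\mathbb{A}=\mathbb{A}^{\ast\ast}$). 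So the whole proof reduces to two dissipativity estimates.

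The first estimate is immediate from the machinery already built: formula~\eqref{eq:Adissdyn-2} computes $\Re(\mathbb{A}\mathbb{u},\mathbb{u})_d$ as a sum of a volume term involving $Q_\me N_\me+N_\me^\ast Q_\me-(Q_\me M_\me)'$, a term on $Y^{(d)}_\mv$ involving $Q_\mv C_\mv+C_\mv^\ast Q_\mv$, and a term on $Y_\mv$ involving $T_\mv+Q_\mv B_\mv+B_\mv^\ast Q_\mv$. The volume term is bounded above by $c\|u\|^2$ for some constant $c$ by \autoref{assum-MQN} (boundedness of $N$, Lipschitz continuity of $Q_\me M_\me$, uniform positivity of $Q_\me$), giving a $\lambda$-type shift. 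Here one should recall that in the relevant setup $C_\mv = -P^{(d,0)}_\mv$ enters only as a bounded perturbation, exactly as in the proof of \autoref{prop:main1dyn}, so the $Y^{(d)}_\mv$ term contributes at most another bounded shift; and the $Y_\mv$ term is controlled because $Y_\mv$ lies in the nonpositive isotropic cone of $T_\mv+Q_\mv B_\mv+B_\mv^\ast Q_\mv-\lambda P^{(d)}_\mv Q_\mv P^{(d)}_\mv$: rewriting, $(T_\mv+Q_\mv B_\mv+B_\mv^\ast Q_\mv)\xi\cdot\bar\xi \le \lambda (P^{(d)}_\mv Q_\mv P^{(d)}_\mv\xi)\cdot\bar\xi = \lambda Q_\mv \mmx_\mv\cdot\bar\mmx_\mv$ for $\xi=\gamma_\mv(u)$, and the right-hand side is $\lambda$ times (part of) the norm $\|\mathbb{u}\|_d^2$. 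Collecting, $\Re(\mathbb{A}\mathbb{u},\mathbb{u})_d\le \mu'\|\mathbb{u}\|_d^2$ for a suitable $\mu'\ge 0$ depending on $\lambda$ and the $N,M,Q$ data.

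The second estimate is the heart of the matter and the expected main obstacle: one must compute $\Re(\mathbb{A}^\ast \mathbb{v},\mathbb{v})_d$ for $\mathbb{v}=(v,\mmy)^\top\in D(\mathbb{A}^\ast)$ using the explicit description of $\mathbb{A}^\ast$ from \autoref{lem:adjoint}, and recognize the resulting boundary form as precisely the quadratic form with the $2\times 2$ block matrix
\[
\begin{pmatrix}
-T_\mv-2\mu & T_\mv \\
P_{\mv}^{(d)}T_\mv & (P_{\mv}^{(d)}B_\mv^\ast -\mu\Id)Q_\mv+Q_\mv(B_\mv-\mu\Id)
\end{pmatrix}
\]
evaluated on $(\gamma_\mv(v),\mmy_\mv)$. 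The volume part is handled just as before (the adjoint differential operator $\mathcal{A}^\ast$ produces the symmetrized coefficient, bounded by assumptions), yielding a bounded shift. For the boundary part, I would integrate by parts using the identity~\eqref{eq:Auv} with the roles adapted, i.e.\ compute $\Re(\mathcal{A}^\ast v,v)$ directly, keeping careful track of the $T_\mv$ boundary term, then add the contributions $\Re(Q_\mv(\widetilde{\mathcal{B}}v)_\mv\cdot\bar\mmy_\mv)$ and $\Re(Q_\mv(\widetilde{\mathcal{C}}v)_\mv\cdot\bar\mmy_\mv)$ coming from the lower block of $\mathbb{A}^\ast$; since $Q_\mv(\widetilde{\mathcal{B}}v)_\mv = P^{(d)}_\mv T_\mv\gamma_\mv(v)$ (because $Q_\mv Q_\mv^{-1}=\Id$ on $Y^{(d)}_\mv$ and $P^{(d)}_\mv$ is the projection), and similarly for $\widetilde{\mathcal{C}}$, the cross terms $T_\mv$ and $P^{(d)}_\mv T_\mv$ and $P^{(d)}_\mv B_\mv^\ast Q_\mv + Q_\mv B_\mv$ appear naturally, while the $-2\mu$ and $-\mu\Id$ entries come from subtracting $\mu\|\mathbb{v}\|_d^2$ (more precisely, from the identities $\mu\|\gamma_\mv(v)\|^2$-type and $\mu Q_\mv\mmy_\mv\cdot\bar\mmy_\mv$ contributions, rescaled appropriately). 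One must be attentive to which inner product (Euclidean versus $Q$-weighted) each term lives in, and to the fact that the constraint $(\gamma_\mv(v),\mmy_\mv)\in\mathbb{Y}^\ast_\mv$ from~\eqref{eq:zv} means $P^{(d),\perp}_\mv(T_\mv\gamma_\mv(v)+B_\mv^\ast Q_\mv\mmy_\mv)=0$, which is exactly the compatibility condition letting one pass between $T_\mv\gamma_\mv(v)$ and its $P^{(d)}_\mv$-projection in the cross terms. Once the boundary form is identified with the stated $2\times2$ block matrix, the hypothesis that $\mathbb{Y}^\ast_\mv$ lies in its nonpositive isotropic cone gives $\Re(\mathbb{A}^\ast\mathbb{v},\mathbb{v})_d\le \mu''\|\mathbb{v}\|_d^2$, completing the proof. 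The bookkeeping of projections and weighted inner products in this last step is where the real care is needed.
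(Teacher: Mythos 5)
Your overall route is exactly the paper's: density of $D(\mathbb A)$, closedness, dissipativity of $\mathbb A-\mu'\mathbb I$ via \eqref{eq:Adissdyn-2} and the first hypothesis, dissipativity of $\mathbb A^\ast-\mu''\mathbb I$ via the explicit adjoint from \autoref{lem:adjoint} and the second hypothesis, and finally the Lumer--Phillips-type criterion \cite[Cor.~II.3.17]{EngNag00}. The two dissipativity estimates are set up correctly: your reduction of the boundary term for $\mathbb A$ to $\lambda Q_\mv \mmx_\mv\cdot\bar\mmx_\mv$ is how \autoref{prop:main1dyn} is used in the paper, and your identification of the boundary form for $\mathbb A^\ast$ with the stated $2\times 2$ block matrix --- including $Q_\mv(\widetilde{\mathcal B}v)_\mv = P^{(d)}_\mv T_\mv\gamma_\mv(v)$ and the appearance of the Hermitian symmetrization $P^{(d)}_\mv B^\ast_\mv Q_\mv + Q_\mv B_\mv$ on $Y^{(d)}_\mv$ --- matches the paper's computation \eqref{eq:A*dyndiss}.

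The one step that does not hold up is your justification of closedness of $\mathbb A$. Arguing that $\mathbb A$ is closed ``since it has compact resolvent'' is circular: to speak of the resolvent at all you need $\rho(\mathbb A)\neq\emptyset$, which is essentially the maximality you are in the course of proving (the paper's compact-resolvent remark comes \emph{after} generation is established). The alternative ``more directly since \dots\ $\mathbb A=\mathbb A^{\ast\ast}$'' is equally circular: for a densely defined closable operator one only has $\mathbb A^{\ast\ast}=\overline{\mathbb A}$, so asserting $\mathbb A=\mathbb A^{\ast\ast}$ is asserting closedness, not proving it. Without closedness, \cite[Cor.~II.3.17]{EngNag00} only yields that the \emph{closure} $\overline{\mathbb A}$ generates a semigroup. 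The paper closes this gap by invoking \cite[Lemma~2.3]{KraMugNag03} for operator matrices of this type, together with the closedness of the diagonal entry $\mathcal A$ already observed in \cite{KraMugNic20} (based on \cite{BasCor16}); with that replacement your argument is complete. A minor point of wording: the Phillips criterion requires that both $\mathbb A$ and $\mathbb A^\ast$ be dissipative after a common shift, not only the adjoint as your parenthetical phrasing of the ``standard result'' suggests --- since you do prove both estimates, this is cosmetic.
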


\begin{proof}
We already know that $\mathbb A$ is densely defined. Also, it is not difficult to prove that $\mathbb A$ is closed: this can be seen invoking~\cite[Lemma~2.3]{KraMugNag03}, since closedness of $\mathcal A$ has been already observed in~\cite{KraMugNic20}, based on computations in~\cite{BasCor16}.

By~\cite[Cor.~II.3.17]{EngNag00}, $m$-dissipativity of $\mathbb A$ will follow if we can check that both $\mathbb A$ and its adjoint $\mathbb A^\ast$ are dissipative. Similarly to what we have already done in \autoref{prop:main1dyn}, for the sake of simplicity and  without loss of generality we assume in the following that $N_\me=C_\mv=0$.

The proof of \autoref{prop:main1dyn} shows that $\mathbb A$ is dissipative under our assumptions. In order to check dissipativity of $\mathbb A^\ast$, we start from the identity \
\begin{equation}\label{eq:A*diss}
\begin{split}
\Re\left({\mathcal A}^\ast u, u\right)
&=
\Re\sum_{\me\in\mE}\int_0^{\ell_\me}
\left(-\left( Q_\me M_\me  \right)'   u_\me
+N_\me^\ast Q_\me u_\me\right)\cdot  \bar u_\me \,dx\\
&\quad +\frac{1}{2}  \sum_{\me\in\mE}\int_0^{\ell_\me}
  \left( Q_\me M_\me\right)' u_\me \cdot      \bar u_\me  \,dx
-\frac{1}{2}\sum_{\mv\in\mV}  T_\mv \gamma_\mv(u)\cdot\gamma_\mv(\bar u)
\end{split}
\end{equation}
which was derived in the proof of~\cite[Thm.~3.11]{KraMugNic20} for all $u\in D_{\max}$. We then find that  for all $\mathfrak u=(u,\mmx)^\top \in D({\mathbb A}^\ast)$,
\begin{equation}\label{eq:A*dyndiss}
\begin{split}
\Re\left({\mathbb A}^\ast \mathfrak{u}, \mathfrak{u}\right)_d
&=
\Re\sum_{\me\in\mE}\int_0^{\ell_\me}
\left(-\left( Q_\me M_\me  \right)'   u_\me
+N_\me^\ast Q_\me u_\me\right)\cdot  \bar u_\me \,dx\\
&\quad +\frac{1}{2}  \sum_{\me\in\mE}\int_0^{\ell_\me}
  \left( Q_\me M_\me\right)' u_\me \cdot      \bar u_\me  \,dx
-\frac{1}{2}\sum_{\mv\in\mV}  T_\mv \gamma_\mv(u)\cdot\gamma_\mv(\bar u)
\\
 &+\Re\sum_{\mv\in\mV}\left(P_{\mv}^{(d)} \left(
T_\mv \gamma_\mv ( u)
+  B_\mv^\ast    Q_\mv  \mmx_\mv\right) \right)\cdot \bar\mmx_\mv. 
\end{split}
\end{equation} 
Hence, ${\mathbb A}^\ast$ is quasi-dissipative if for some $\mu\ge 0$ it holds
\[
-\frac{1}{2}T_\mv \xi \cdot \bar \xi
+\Re\left(
\left(P_{\mv}^{(d)} \left(
T_\mv \xi
+  B_\mv^\ast    Q_\mv  \mmx\right)\right)\cdot \bar\mmx\right)\leq {\mu} \left\|\begin{pmatrix} \xi\\ Q_\mv^\frac12 \mmx \end{pmatrix} \right\|^2_{\C^{k_\mv}\oplus Y_\mv^{(d)}}
\quad\hbox{for all } \begin{pmatrix} \xi\\ \mmx \end{pmatrix}\in \mathbb Y^\ast_\mv,\]
where the inner product and the norm are the Euclidean ones. This is
equivalent to
\[
\left(
\begin{pmatrix}
-T_\mv & T_\mv \\
P_{\mv}^{(d)}T_\mv & P_{\mv}^{(d)}B_\mv^\ast    Q_\mv+Q_\mv  B_\mv 
\end{pmatrix}
\begin{pmatrix}
\xi \\ \mmx
\end{pmatrix},
\begin{pmatrix}
\xi \\ \mmx
\end{pmatrix}\right)_{\C^{k_\mv}\oplus Y_\mv^{(d)}}
\leq 2\mu \left\|\begin{pmatrix} \xi\\ Q_\mv^\frac12\mmx \end{pmatrix} \right\|^2_{\C^{k_\mv}\oplus Y_\mv^{(d)}}
\quad\hbox{for all } \begin{pmatrix} \xi\\ \mmx \end{pmatrix}\in \mathbb Y^\ast_\mv,
\]
and the claim follows.
\end{proof} 

Again, repeating the same argument for $-\mathbb A$ yields the following.

\begin{cor}\label{cor:group-2}
For all $\mv\in \mV$, let 
\begin{itemize}
\item $Y^{(d)}_\mv$ be a subspace of the null isotropic cone of the quadratic form on $Y^{(d)}_\mv$ associated with 
\[T_\mv +Q_\mv B_\mv+B_\mv^\ast Q_\mv -\lambda P_\mv^{(d)} Q_\mv  P_\mv^{(d)}\]
 for some $\lambda\geq 0$, and
\item $\mathbb Y^\ast_\mv$ as in~\eqref{eq:zv} be a subspace of the null isotropic cone of the quadratic form  on $\C^{k_\mv}\oplus Y^{(d)}_\mv$ associated with
\begin{equation*}
\begin{pmatrix}
-T_\mv-2\mu & T_\mv \\
P_{\mv}^{(d)}T_\mv & (P_{\mv}^{(d)}B_\mv^\ast  -\mu \Id )  Q_\mv+Q_\mv( B_\mv -\mu \Id)
\end{pmatrix}
\end{equation*}
for some $\mu\ge 0$.
\end{itemize} 
Then both $\pm\mathbb A$ are quasi-$m$-dissipative operators, and accordingly $\mathbb A$ generates a strongly continuous group on $\bL^2_d(\mathcal G)$.
\end{cor}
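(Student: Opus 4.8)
The plan is to follow the same route by which \autoref{cor:group} was deduced from \autoref{prop:main1dyn}: keep the proof of \autoref{thm:new-for-dirac} essentially untouched and then re-run it with $-\mathbb A$ in place of $\mathbb A$, the whole point being that a \emph{null} isotropic cone hypothesis, unlike a merely nonpositive one, forces the relevant boundary quadratic forms to vanish identically and hence produces dissipativity estimates that are valid with either sign.

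First I would record that the null isotropic cone of a Hermitian form is contained in its nonpositive isotropic cone, so that the hypotheses assumed here imply those of \autoref{thm:new-for-dirac}; that theorem then already yields that $\mathbb A$ is quasi-$m$-dissipative, hence generates a strongly continuous semigroup on $\bL^2_d(\mathcal G)$. It remains to establish the same for $-\mathbb A$. As in the proofs of \autoref{prop:main1dyn} and \autoref{thm:new-for-dirac}, $-\mathbb A$ is densely defined and closed (by \cite[Lemma~2.3]{KraMugNag03} together with the closedness of $\mathcal A$, cf.~\cite{BasCor16}), its adjoint is $(-\mathbb A)^\ast=-\mathbb A^\ast$ with $\mathbb A^\ast$ as in \autoref{lem:adjoint}, and one may reduce without loss of generality to $N_\me=0$ and $C_\mv=0$, these being bounded perturbations that affect neither semigroup nor group generation. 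By \cite[Cor.~II.3.17]{EngNag00} it is then enough to check that $-\mathbb A$ and $-\mathbb A^\ast$ are both quasi-dissipative.

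The crux is the following observation. Under the above reduction, identities \eqref{eq:Adissdyn-2} and \eqref{eq:A*dyndiss} express $\Re(\mathbb A\mathbb u,\mathbb u)_d$ and $\Re(\mathbb A^\ast\mathfrak u,\mathfrak u)_d$ as a bounded term plus $\frac12$ times the quadratic form associated with $T_\mv+Q_\mv B_\mv+B_\mv^\ast Q_\mv$ (evaluated along the traces $\gamma_\mv(u)$) and, respectively, plus $\frac12$ times the quadratic form associated with the $2\times2$ block operator matrix of the second hypothesis (evaluated on $\mathbb Y^\ast_\mv$). Since the cones are now required to be null rather than only nonpositive, both forms vanish identically on the respective subspaces once the shift parameters $\lambda$ and $\mu$ have been absorbed into the bounded part; hence $|\Re(\mathbb A\mathbb u,\mathbb u)_d|\le\mu\|\mathbb u\|_d^2$ and $|\Re(\mathbb A^\ast\mathfrak u,\mathfrak u)_d|\le\mu\|\mathfrak u\|_d^2$ for a suitable $\mu\ge0$, so that all four of $\pm\mathbb A$ and $\pm\mathbb A^\ast$ are quasi-dissipative. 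Applying \cite[Cor.~II.3.17]{EngNag00} to $\mathbb A$ and to $-\mathbb A$ separately, both are quasi-$m$-dissipative; consequently $\mathbb A$ and $-\mathbb A$ each generate a strongly continuous semigroup, and therefore $\mathbb A$ generates a strongly continuous group on $\bL^2_d(\mathcal G)$.

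I do not anticipate a real obstacle: the statement is a corollary, and its substance is just that a null cone makes a quadratic form vanish, whereby dissipativity becomes sign-symmetric. The only points that call for a little care are the bookkeeping of the shift parameters $\lambda$, $\mu$ and of the projectors $P_\mv^{(d)}$, $P_\mv^{(d,0)}$ under the change of sign, and the (routine) verification that $-\mathbb A^\ast$ is still of the form given by \autoref{lem:adjoint} with the data replaced by their negatives, so that \autoref{thm:new-for-dirac}'s computation of $\Re(\mathbb A^\ast\mathfrak u,\mathfrak u)_d$ transfers verbatim.
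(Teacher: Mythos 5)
Your proposal is correct and follows essentially the same route as the paper, which disposes of this corollary with the single remark that one repeats the argument of \autoref{thm:new-for-dirac} for $-\mathbb A$: the null-cone hypotheses make the boundary quadratic forms in \eqref{eq:Adissdyn-2} and \eqref{eq:A*dyndiss} vanish up to the $\lambda$- and $\mu$-shifts, so quasi-dissipativity of $\mathbb A$, $\mathbb A^\ast$, $-\mathbb A$, $-\mathbb A^\ast$ all hold and \cite[Cor.~II.3.17]{EngNag00} applies to both signs. Your explicit bookkeeping of the shift terms is a faithful expansion of what the authors leave implicit.
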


\begin{rem}\label{rem-Z and global-2}
We can formulate conditions for dissipativity (rather than mere \textit{quasi}-dissipativity) and unitarity of the (semi)group generated by $\mathbb A$ along the lines of \autoref{rem-Z and global}.

(1) $\mathbb A$ generates a \textit{contractive} semigroup on $\bL^2_d(\mathcal G)$ if the assumptions of \autoref{thm:new-for-dirac} are complemented by the following:
\begin{itemize}
\item $Q_\me(x)N_\me(x)+N_\me(x)^\ast Q_\me(x)-(Q_\me M_\me)'(x)$ is negative semi-definite, for all $\me\in\mE$ and a.e.\ $x\in (0,\ell_\me)$; and 
\item $Y^{(d)}_\mv$ is for all $\mv\in \mV$ a subspace of the negative isotropic cone of the quadratic form associated with $Q_\mv C_\mv+C^\ast_\mv Q_\mv$.
\end{itemize}
(2) If, additionally to the assumptions of \autoref{cor:group-2}, 
\begin{itemize}
\item $Q_\me(x)N_\me(x)+N_\me(x)^\ast Q_\me(x)=(Q_\me M_\me)'(x)$, for all $\me\in\mE$ and a.e.\ $x\in (0,\ell_\me)$;
\item $Y^{(d)}_\mv$ is for all $\mv\in \mV$ a subspace of the null isotropic cone of the quadratic form associated with $Q_\mv C_\mv+C^\ast_\mv Q_\mv$; and
\item $\mathbb Y^\ast_\mv$ is for all $\mv\in \mV$ a subspace of the null isotropic cone of the quadratic form associated with
\begin{equation*}
\begin{pmatrix}
0 & 0\\
0 & Q_\mv C_\mv+C^\ast_\mv Q_\mv
\end{pmatrix},
\end{equation*}
then $\mathbb A$ generates a \textit{unitary} group on $\bL^2_d(\mathcal G)$.
\end{itemize}
\end{rem}
}

\begin{rem}\label{rem:bcglob}
 We can further easily replace local boundary conditions by global ones: to this purpose, we take the $2k\times 2k$ matrix $T$ given by
\begin{equation}\label{eq:Tdef}
T:= \begin{pmatrix}
-\diag \left(
Q_\me(0) M_\me(0)\right)_{\me\in\mE}& 0
\\
0& \diag \left(Q_\me(\ell_\me) M_\me(\ell_\me)\right)_{\me\in\mE}
\end{pmatrix}
\end{equation}
and replace $B_\mv, C_\mv, Q_\mv$ by globaly defined operators $B\colon Y\to Y^{(d)}$, $C^{(d)}, Q^{(d)} \colon Y^{(d)}\to Y^{(d)}$ for 
some subspaces $Y^{(d)}\subset Y\subset \C^{2k}$. {With the notation
\[
\gamma (u):= 
\left(\left(u_\me(0)\right)_{\me\in\mE}, \left(u_\me (\ell_\me)\right)_{\me\in\mE}\right)^\top,
\]
we} thus consider operator $\mathbb A$ defined as in \eqref{eq:opA} with domain
\begin{eqnarray}\label{eq:domAglobal}
 D(\mathbb A):=\left\{\begin{pmatrix} u \\ \mmx \end{pmatrix}\in D_{\max}\oplus Y^{(d)}: 
\gamma( u )\in Y\hbox{ and }\mmx={P}^{(d)}\gamma( u )  \right\}
\end{eqnarray}
and assume $Y$ to be the appropriate isotropic cone of the quadratic form associated with $T+Q^{(d)} B + B^\ast Q^{(d)}$.
{
In this case $Z = (Y^\perp \oplus \Ran B^\ast) + \Ker B^\ast \subset \C^{2k}$ and the well-posedness condition \eqref{eq:basis} becomes 
\begin{equation}\label{eq:basis-global}
 \dim Z=\dim P_K Z = k,
\end{equation} 
where $P_K$ is the orthogonal projector onto
\[
K=\left\{\left(\left(K_\me\right)_{\me\in\mE}, \left(K_\me\right)_{\me\in\mE}\right)^\top:
K_\me\in \mathbb{C}^{k_\me} \text{ for all }\me\in\mE\right\}
\]
with respect to the euclidean inner product of $\C^{2k}$,} see  \cite[Rem.~3.13]{KraMugNic20}) for details.
In Section~\ref{sec:dirac} we are going to see that \eqref{eq:basis} and, equivalently, \eqref{eq:basis-global} may fail to hold even when the equation can be -- by other means -- proved to be well-posed.
 \end{rem}

\section{Qualitative properties}\label{sec:qualitative}

{
We now study when the (semi)group generated by $\mathbb{A}$ is, real, positive, {or $\infty$-contractive}. 
Let $C\subset \C$ be a closed and convex set; we will denote by $P_C:\C\to \C$ the projector onto $C$. 
As in \cite[\S4]{KraMugNic20}, we shall apply to the Hilbert space of $C$-valued vectors in $\bL^2_d(\mathcal G)$, i.e., to
\[
K :=\bL^2_d(\mathcal G;C):= \bL^2(\mathcal G;C) \oplus Y^{(d)}_C,
\]
a generalization (cf.~\cite[Lemma~4.3]{KraMugNic20}) of a classical result by Brezis for the invariance of the convex subsets of Hilbert spaces; here
\[
\bL^2(\mathcal G;C):=\{u\in L^2(\mathcal G): u_\me(x)\in C^{k_\me}\ \hbox{ for a.e. }x\in (0,\ell_\me)\hbox{ and all }\me \in\mE\}
\]
and
\[
Y^{(d)}_C := \{ \mmx\in Y^{(d)}:\mmx_\mv \in C^{k_\mv} \hbox{ for all }\mv\in \mV \}.
\] 
(Observe that the latter might well be trivial, like in the case of $Y^{(d)}$ spanned by the vector $(1,-1)^\top$ and $C=\R_+$.)

To this end we first need to relate the minimizing projector $\mathbb{P}_K^Q$ with respect to the inner product $(\cdot,\cdot)_d$ in the Hilbert space ${\bL}^2_{d}(\mathcal G)$ defined in \eqref{prod-d} to the minimizing projectors $P_K$ and $P^{(d)}_K$ with respect to the standard inner products in the Hilbert spaces ${\bf L}^2(\mathcal G)$ and $Y^{(d)}$, respectively: i.e., the products 
\begin{eqnarray}
 \langle u,v \rangle &:=&\sum_{\me\in\mE}\int_0^{\ell_\me} u_\me(x)\cdot\overline{v}_\me(x)\ dx, \quad u,v\in {\bf L}^2(\mathcal G), \label{st-L2}\\
\mmx \cdot \bar\mmy &:=&\sum_{\mv\in \mV} \mmx_\mv\cdot \bar \mmy_\mv , \quad \mmx,\mmy \in Y^{(d)}. \label{st-Z}
\end{eqnarray}
By following the steps in the proof of \cite[Lemma~4.4]{KraMugNic20} and performing the calculations for each component of $K$ separately, we obtain the following characterization.

\begin{lemma}\label{lem:project}
Assume $Q_\me^{\frac{1}{2}}(x)$ and $Q^{\frac{1}{2}}_\mv$ to be bijective maps on $C^{k_\me}$ and $C^{k_\mv}$ for all $\me\in\mE$ and all $x\in [0,\ell_\me]$ and for all $\mv\in\mV$, respectively.
Then the minimizing projector $\mathbb{P}_K^Q$ with respect to the inner product \eqref{prod-d} onto $K=\bL^2_d(\mathcal G;C)$
is given by
\begin{equation}\label{eq:PQKdyn}
\mathbb{P}_{K}^Q=\begin{pmatrix}Q^{-\frac{1}{2}} P_K Q^{\frac{1}{2}} & 0\\ 0 &(Q^{(d)})^{-\frac{1}{2}} P^{(d)}_K (Q^{(d)})^{\frac{1}{2}} \end{pmatrix}
\end{equation}
where $Q:=\diag(Q_\me)_{\me\in \mE}$ and $Q^{(d)}:=\diag(Q_\mv)_{\mv\in \mV}$ are  block diagonal matrices, while $P_K$ and $P^{(d)}_K$ are the minimizing projectors with respect to the standard inner products \eqref{st-L2} and \eqref{st-Z}, respectively.
\end{lemma}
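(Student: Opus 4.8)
The plan is to reduce the claim to a product structure: since the Hilbert space $\bL^2_d(\mathcal G) = \bL^2(\mathcal G)\oplus Y^{(d)}$ is an orthogonal direct sum with respect to $(\cdot,\cdot)_d$ (the cross terms in~\eqref{prod-d} vanish), and since the target set $K = \bL^2(\mathcal G;C)\oplus Y^{(d)}_C$ is a Cartesian product of a subset of the first factor with a subset of the second, the minimizing projector onto $K$ splits as a block-diagonal operator whose blocks are the $(\cdot,\cdot)_d$-minimizing projectors onto $\bL^2(\mathcal G;C)$ inside $\bL^2(\mathcal G)$ (with the weighted inner product $\sum_\me\int Q_\me u_\me\cdot\bar v_\me$) and onto $Y^{(d)}_C$ inside $Y^{(d)}$ (with the weighted inner product $\sum_\mv Q_\mv\mmx_\mv\cdot\bar\mmy_\mv$), respectively. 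So the real content is the identification of each of these two weighted-projector blocks with a conjugated standard-projector, which is exactly the content of~\cite[Lemma~4.4]{KraMugNic20} applied in each factor.

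First I would record the orthogonal splitting argument: for $\begin{pmatrix} u\\\mmx\end{pmatrix}\in\bL^2_d(\mathcal G)$, minimizing $\left\|\begin{pmatrix}u\\\mmx\end{pmatrix}-\begin{pmatrix}v\\\mmy\end{pmatrix}\right\|_d^2 = \|u-v\|_{Q}^2 + \|\mmx-\mmy\|_{Q^{(d)}}^2$ over $\begin{pmatrix}v\\\mmy\end{pmatrix}\in K$ decouples into independently minimizing $\|u-v\|_Q^2$ over $v\in\bL^2(\mathcal G;C)$ and $\|\mmx-\mmy\|_{Q^{(d)}}^2$ over $\mmy\in Y^{(d)}_C$, since $K$ imposes no coupling constraint between the two components. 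This immediately gives the block-diagonal form of $\mathbb P_K^Q$, with the $(1,1)$-block equal to the $Q$-weighted minimizing projector onto $\bL^2(\mathcal G;C)$ and the $(2,2)$-block equal to the $Q^{(d)}$-weighted minimizing projector onto $Y^{(d)}_C$.

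Next I would treat each block. For the first block, I follow the proof of~\cite[Lemma~4.4]{KraMugNic20} verbatim: the substitution $w := Q^{1/2} u$, $z := Q^{1/2} v$ turns the $Q$-weighted distance into the standard $\langle\cdot,\cdot\rangle$-distance between $w$ and $z$, and — crucially using the hypothesis that $Q_\me^{1/2}(x)$ is a bijection of $C^{k_\me}$ — the constraint $v\in\bL^2(\mathcal G;C)$ is equivalent to $z = Q^{1/2} v\in\bL^2(\mathcal G;C)$, i.e.\ $z$ ranges over the same set. Hence the $Q$-weighted projector equals $Q^{-1/2} P_K Q^{1/2}$, where $P_K$ is the standard-inner-product projector onto $\bL^2(\mathcal G;C)$. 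Performing the calculation componentwise in $\me$ (each $L^2(0,\ell_\me)^{k_\me}$ factor) is exactly "performing the calculations for each component of $K$ separately" as stated. The second block is entirely analogous, with $Y^{(d)} = \bigoplus_\mv Y^{(d)}_\mv$, the weight $Q^{(d)} = \diag(Q_\mv)_\mv$, the substitution $Q_\mv^{1/2}\mmx_\mv$, and the hypothesis that $Q_\mv^{1/2}$ is a bijection of $C^{k_\mv}$ (which guarantees the constraint set $Y^{(d)}_C$ is preserved under the substitution); one obtains $(Q^{(d)})^{-1/2} P_K^{(d)} (Q^{(d)})^{1/2}$. Assembling the two blocks yields~\eqref{eq:PQKdyn}.

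I expect the only genuinely delicate point to be the invariance of the constraint sets $\bL^2(\mathcal G;C)$ and $Y^{(d)}_C$ under the weighting substitutions — this is precisely where the bijectivity hypotheses on $Q_\me^{1/2}(x)$ and $Q_\mv^{1/2}$ as maps on $C^{k_\me}$ and $C^{k_\mv}$ enter, and without them the conjugation formula would fail because $P_K$ would act on a different (non-product, non-$C$-valued) set. Everything else is the bookkeeping of carrying the argument of~\cite[Lemma~4.4]{KraMugNic20} through the two orthogonal summands; since that lemma is already available, I would keep this proof short, citing it for the per-component computation and spelling out only the orthogonal decoupling of $K$ and the role of the bijectivity assumption.
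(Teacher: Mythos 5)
Your proposal is correct and follows exactly the route the paper intends: the paper itself gives no separate proof, merely remarking that one follows the steps of \cite[Lemma~4.4]{KraMugNic20} ``for each component of $K$ separately,'' which is precisely your orthogonal decoupling of $K=\bL^2(\mathcal G;C)\oplus Y^{(d)}_C$ followed by the conjugation substitution $w=Q^{1/2}u$ (resp.\ $(Q^{(d)})^{1/2}\mmx$) in each factor. You also correctly identify the bijectivity of $Q_\me^{1/2}(x)$ and $Q_\mv^{1/2}$ on $C^{k_\me}$ and $C^{k_\mv}$ as the hypothesis ensuring the constraint sets are preserved under the substitution, which is the only nontrivial point.
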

In the following, we are going to focus on the cases of 
\begin{itemize}
\item $C=\R$, 
\item $C=\R_+$, 
\item  $C=\{z\in \C:|z|\le 1\}$.
\end{itemize}
Our arguments in the following rely upon~\cite[Lemma~4.3]{KraMugNic20}, which holds for quasi-m-dissipative operators; but in the first two cases ($C=\R$, $C=\R_+$) the relevant conditions for invariance are equivalent in the quasi-dissipative and dissipative case, since reality and positivity of a semigroup are not affected by a scalar additive perturbation of its generator.

To begin with, let us consider $C=\R$: then \autoref{lem:project} states that if $Q_\mv$ and $Q_\me$ are real-valued, then the minimizing projector onto $K=\bL_d^2(\mathcal G;\R)$ is given by
\begin{equation*}\label{eq:qrr}
\mathbb{P}_{K}^Q {u\choose\mmx} =
\begin{pmatrix}Q^{-\frac{1}{2}} \Re\left( Q^{\frac{1}{2}} u\right)\\ (Q^{(d)})^{-\frac{1}{2}} \Re\left( (Q^{(d)})^{\frac{1}{2}} \mmx \right)\end{pmatrix}
= {{\Re u}\choose {\Re \mmx}},\qquad  {u\choose\mmx}\in \bL^2_d(\mathcal G) .
\end{equation*}
This allows for an extension of~\cite[Prop.~4.5]{KraMugNic20}.

\begin{prop}\label{prop:real-dyn}
Under the assumptions of \autoref{prop:main1dyn} or \autoref{thm:new-for-dirac}, let
\begin{equation}\label{eq:realcond-0}
\Re \xi\in \bigoplus_{\mv\in\mV} Y_\mv \hbox{ for all } \xi\in \bigoplus_{\mv\in\mV
} Y_\mv
\quad \text{and} \quad \Re \mmx\in \bigoplus_{\mv\in\mV} Y_\mv^{(d)} \hbox{ for all } \mmx\in \bigoplus_{\mv\in\mV
} Y_\mv^{(d)},
\end{equation}
let the matrix-valued mapping $Q_\me $ be real-valued for all $\me\in \mE$, and let the matrices $Q_\mv,B_\mv,C_\mv$ be real for all $\mv\in\mV$. Then the semigroup generated by $\mathbb A$ is real if 
the matrix-valued mappings $M_\me,N_\me$ are real-valued for all $\me\in\mE$.
\end{prop}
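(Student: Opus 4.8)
The plan is to apply the invariance criterion from~\cite[Lemma~4.3]{KraMugNic20} to the generator $\mathbb A$ (or, in the setting of \autoref{thm:new-for-dirac}, to the quasi-$m$-dissipative operator $\mathbb A$), with the closed convex set $C=\R$ and the associated subspace $K=\bL_d^2(\mathcal G;\R)$. Since $Q_\me$ and $Q_\mv$ are real-valued, \autoref{lem:project} identifies the minimizing projector $\mathbb P_K^Q$ with the componentwise real part, as recorded in the display preceding the statement. Brezis-type invariance then reduces to two verifications: first, that $\mathbb P_K^Q$ maps $D(\mathbb A)$ into $D(\mathbb A)$; and second, that $\Re(\mathbb A\mathbb u,\mathbb u-\mathbb P_K^Q\mathbb u)_d\le 0$ for all $\mathbb u\in D(\mathbb A)$ — equivalently here, after a standard reduction, that $\Re(\mathbb A\,(\Im\,\mathbb u^\sharp),\Im\,\mathbb u^\sharp)_d\ge0$ or, more simply, that $\mathbb A$ commutes with $\mathbb P_K^Q$ on the domain and preserves realness.

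First I would check domain invariance. If $\mathbb u=(u,\mmx)^\top\in D(\mathbb A)$, then $\Re u\in D_{\max}=\bigoplus_\me H^1(0,\ell_\me)^{k_\me}$ trivially, and $\gamma_\mv(\Re u)=\Re\gamma_\mv(u)$; by the first hypothesis in~\eqref{eq:realcond-0}, $\Re\gamma_\mv(u)\in Y_\mv$. Moreover $P^{(d)}_\mv$ is the orthogonal projector onto $Y^{(d)}_\mv$ with respect to the Euclidean inner product, and since $Y^{(d)}_\mv$ is invariant under complex conjugation by the second hypothesis in~\eqref{eq:realcond-0}, $P^{(d)}_\mv$ commutes with $\Re$; hence $\Re\mmx_\mv=\Re P^{(d)}_\mv\gamma_\mv(u)=P^{(d)}_\mv\Re\gamma_\mv(u)=P^{(d)}_\mv\gamma_\mv(\Re u)$, so $(\Re u,\Re\mmx)^\top\in D(\mathbb A)$. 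Next, because $M_\me,N_\me$ are real-valued, $(\mathcal A\,\Re u)_\me=M_\me(\Re u)'_\me+N_\me\Re u_\me=\Re(\mathcal A u)_\me$; because $B_\mv,C_\mv$ are real, $(\mathcal B\,\Re u)_\mv=B_\mv\gamma_\mv(\Re u)=\Re(\mathcal B u)_\mv$ and $(\mathcal C\,\Re\mmx)_\mv=C_\mv\Re\mmx_\mv=\Re(\mathcal C\mmx)_\mv$. Therefore $\mathbb A\,\mathbb P_K^Q\mathbb u=\mathbb P_K^Q\,\mathbb A\mathbb u$, i.e.\ $\mathbb A$ commutes with the real-part projector on its domain, and in particular maps real elements of its domain to real elements of the range.

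To conclude, I would feed this into the invariance lemma. Writing $\mathbb u=\mathbb P_K^Q\mathbb u+(\mathbb u-\mathbb P_K^Q\mathbb u)$ with the two summands orthogonal in $(\cdot,\cdot)_d$ — orthogonality holds because $Q_\me,Q_\mv$ are real and the decomposition is the real/imaginary splitting, which is $(\cdot,\cdot)_d$-orthogonal for real symmetrizers — the commutation $\mathbb A\mathbb P_K^Q=\mathbb P_K^Q\mathbb A$ gives $\Re(\mathbb A\mathbb u,\mathbb u-\mathbb P_K^Q\mathbb u)_d=\Re(\mathbb A(\mathbb u-\mathbb P_K^Q\mathbb u),\mathbb u-\mathbb P_K^Q\mathbb u)_d$, and the right-hand side is $\le\lambda\|\mathbb u-\mathbb P_K^Q\mathbb u\|_d^2$ by the (quasi-)dissipativity estimates established in the proofs of \autoref{prop:main1dyn} and \autoref{thm:new-for-dirac}. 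Since, as noted in the paragraph preceding \autoref{prop:real-dyn}, reality of the semigroup is unaffected by a scalar shift of the generator, passing to $\mathbb A-\lambda\mathbb I$ (which is genuinely dissipative and satisfies the same commutation) lets us apply~\cite[Lemma~4.3]{KraMugNic20} verbatim, yielding invariance of $K$ under the semigroup, i.e.\ reality.

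I do not expect a serious obstacle here; the content is essentially that all structural data ($M_\me,N_\me,Q_\me,Q_\mv,B_\mv,C_\mv$, and the subspaces $Y_\mv,Y^{(d)}_\mv$) are compatible with complex conjugation, so every operator in sight commutes with $\Re$. The only point deserving care is that the minimizing projector $\mathbb P_K^Q$ genuinely agrees with the componentwise real part — which is exactly the hypothesis of \autoref{lem:project} (bijectivity of $Q^{1/2}_\me,Q^{1/2}_\mv$ on the real subspaces, automatic here since these matrices are real, symmetric and positive definite) — and that the orthogonal splitting used in the Brezis criterion is the one induced by $(\cdot,\cdot)_d$, which for real symmetrizers coincides with the real/imaginary splitting. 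Both are routine once the realness of all data is in place.
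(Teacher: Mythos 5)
Your proposal is correct and follows essentially the same route as the paper: identify the minimizing projector with the componentwise real part via \autoref{lem:project}, check that $\mathbb P_K^Q$ leaves $D(\mathbb A)$ invariant (using that \eqref{eq:realcond-0} forces the projectors $P^{(d)}_\mv$ to be real matrices), and then verify the Brezis-type criterion of \cite[Lemma~4.3]{KraMugNic20} using the realness of $M_\me,N_\me,Q_\me,Q_\mv,B_\mv,C_\mv$, with the scalar shift handling quasi-dissipativity. The only cosmetic difference is that you phrase the final step as ``$\mathbb A$ commutes with $\Re$'' while the paper reduces it to the reality of the pairing $\bigl(\mathbb A(\Re u,\Re\mmx)^\top,(\Im u,\Im\mmx)^\top\bigr)_d$ and checks it term by term; these amount to the same computation.
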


\begin{proof}
First observe that by \cite[Lemma~4.7]{KraMugNic20}, \eqref{eq:realcond-0} holds if and only if $Y_\mv, Y_\mv^{(d)}$, for each $\mv\in\mV$,  are spanned by entry-wise real vectors only. Thus, the orthogonal projectors $P^{(d)}_\mv$ are real matrices for all $\mv$ (see, e.g., \cite[(5.13.3)]{meyer04}). 
By the assumptions we then obtain, 
\[\mathbb{P}_K^Q {u\choose\mmx}\in D(\mathbb{A})\text{ whenever }{u\choose\mmx} \in D(\mathbb{A}).\]
As in the proof of \cite[Prop.~4.5]{KraMugNic20} we deduce that the reality of the semigroup is equivalent to 
\begin{equation}\label{eq:condreal}
\left(\mathbb{A} {{\Re u}\choose {\Re\mmx}},{{\Im u}\choose {\Im \mmx}}\right)_d =\left( {\mathcal A{\Re u}\choose \mathcal B{\Re u}+\mathcal C{\Re\mmx}},{{\Im u}\choose {\Im \mmx}}\right)_d\in \mathbb{R} \quad \text{for all } {u\choose\mmx} \in D(\mathbb{A}),
\end{equation}
using the notation from~\eqref{eq:opA}.

Now,  the first term reads $\sum_{\me\in\mE}\int_0^{\ell_\me}Q_\me (M_\me \frac{d}{dx}+N_\me) \Re u\cdot \Im \bar u\ dx\in\R$ for all $u\in D(\mathcal A)$, which by \cite[Lemma~4.6]{KraMugNic20} is the case
if and only if $M_\me,N_\me$ are real-valued for all $\me\in\mE$. The boundary term  
$$ \sum_{\mv\in \mV} Q_\mv \left(B_\mv+C_\mv P^{(d)}_\mv\right) \gamma_\mv(\Re u) \cdot { P^{(d)}_\mv \gamma_\mv(\Im \bar u )} \in\R$$ if and only if $B_\mv+C_\mv P^{(d)}_\mv $ is real for all $\mv\in\mV$, since all entries of $Q_\mv,P^{(d)}_\mv$ are real. Finally, the reality of $B_\mv,C_\mv$ is  sufficient to ensure the reality of $B_\mv+C_\mv P^{(d)}_\mv$.
\end{proof}

We continue with the study of positivity.
Without loss of generality we restrict ourselves to the real Hilbert space $\bL^2_d(\mathcal G;\R)$ and consider the convex subsets $C=\R_+$. First, let us recall that by \cite[Lemma~4.8]{KraMugNic20}, a real symmetric and positive definite matrix is a lattice isomorphism if and only if it is diagonal. Therefore we shall assume that the matrices $Q_\mv$ and $Q_\me(x)$ are real and diagonal for all $\mv\in\mV$, $\me\in\mE$, and $x\in [0,\ell_\me]$. Therefore the minimizing projector $\mathbb{P}_K^Q$ onto $K=\bL^2_d(\mathcal G;\R_+)$ given in \autoref{eq:PQKdyn} again takes a simpler form,
\begin{equation*}
\mathbb{P}_K^Q {u\choose\mmx} =
\begin{pmatrix}Q^{-\frac{1}{2}} \left( Q^{\frac{1}{2}} u\right)^+\\ (Q^{(d)})^{-\frac{1}{2}} \left( (Q^{(d)})^{\frac{1}{2}} \mmx \right)^+\end{pmatrix}
= {{u^+}\choose{\mmx^+}} .
\end{equation*}

\begin{prop}\label{prop:posit-dyn}
Under the assumptions of \autoref{prop:main1dyn} or \autoref{thm:new-for-dirac}, 
let the matrices
\begin{itemize}
\item $N_\me(x), Q_\me(x)$, $Q_\mv, B_\mv,C_\mv$ be real-valued,
\item $M_\me(x), Q_\me(x), Q_\mv$ be diagonal, and
\item the projector $P^+_\mv$ onto the positive cone of $\R^{k_\mv}$ commutes with $P_\mv^{(d)}$,
\end{itemize}
for all $\me\in\mE$, a.e.\ $x\in [0,\ell_\me]$, and all $\mv\in\mV$. Furthermore, let
\begin{equation}\label{eq:cond-positive-0}
\xi^+ \in {\bigoplus_{\mv\in\mV} Y_\mv } \hbox{ for all } \xi\in {\bigoplus_{\mv\in\mV} Y_\mv }
{\quad \text{and} \quad  \mmx^+\in \bigoplus_{\mv\in\mV} Y_\mv^{(d)} \hbox{ for all } \mmx\in \bigoplus_{\mv\in\mV
} Y_\mv^{(d)}.}
\end{equation}
If, additionally, all matrices $B_\mv+C_\mv P^{(d)}_\mv$ are positive, then the semigroup generated by $\mathbb A$ on $\bL^2_d(\mathcal G, \mathbb R)$ is positive if for all $\me\in\mE$ and a.e.\ $x\in [0,\ell_\me]$  all off-diagonal entries of the matrices
$N_\me (x)$   are nonnegative.
In the special case of $B_\mv=0$ for all $\mv\in \mV$, 
the semigroup generated by $\mathbb A$ is positive if 
all off-diagonal entries of the matrices $N_\me (x)$ and {$C_\mv$} are  nonnegative, for all $\me\in\mE$, a.e.\ $x\in [0,\ell_\me]$, and all $\mv\in \mV$.
\end{prop}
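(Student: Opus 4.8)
The plan is to apply the invariance criterion of \cite[Lemma~4.3]{KraMugNic20} to the quasi-$m$-dissipative operator $\mathbb A$ on the real Hilbert space $\bL^2_d(\mathcal G;\R)$ and the closed convex set $K=\bL^2_d(\mathcal G;\R_+)$, exactly as in the proof of \autoref{prop:real-dyn}. First I would verify that $\mathbb P_K^Q$ maps $D(\mathbb A)$ into itself. Under the stated hypotheses the matrices $Q_\me(x),Q_\mv$ are real and diagonal, hence lattice isomorphisms, so the minimizing projector reduces to the componentwise positive part $(u,\mmx)^\top\mapsto(u^+,\mmx^+)$. The condition~\eqref{eq:cond-positive-0} guarantees $\gamma_\mv(u)\in Y_\mv$ implies $\gamma_\mv(u)^+=\gamma_\mv(u^+)\in Y_\mv$ and $\mmx_\mv^+\in Y_\mv^{(d)}$; the assumption that $P^+_\mv$ commutes with $P^{(d)}_\mv$ then yields $\mmx_\mv^+=(P^{(d)}_\mv\gamma_\mv(u))^+=P^{(d)}_\mv\gamma_\mv(u)^+=P^{(d)}_\mv\gamma_\mv(u^+)$, so $(u^+,\mmx^+)^\top\in D(\mathbb A)$.

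Next, by \cite[Lemma~4.3]{KraMugNic20} positivity of the semigroup is equivalent to
\[
\left(\mathbb A\begin{pmatrix}u^+\\ \mmx^+\end{pmatrix},\begin{pmatrix}u^-\\ \mmx^-\end{pmatrix}\right)_d\le 0\qquad\text{for all }\begin{pmatrix}u\\ \mmx\end{pmatrix}\in D(\mathbb A),
\]
where $u^-=(u^+-u)$ and likewise for $\mmx$. Expanding the inner product~\eqref{prod-d} and using $\mathcal C=0$ in the reduced operator (or carrying the $C_\mv$ term along), the left-hand side splits into an ``edge part'' $\sum_{\me}\int_0^{\ell_\me}Q_\me(M_\me\frac{d}{dx}+N_\me)u^+\cdot u^-\,dx$ and a ``vertex part'' $\sum_{\mv}Q_\mv(B_\mv+C_\mv P^{(d)}_\mv)\gamma_\mv(u^+)\cdot\mmx^-_\mv$. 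For the edge part: since $M_\me(x)$ is diagonal, the term $\int Q_\me M_\me (u^+)'\cdot u^-\,dx$ vanishes because $(u^+_j)'u^-_j=0$ a.e.; and $Q_\me$ being diagonal with positive entries, $\int Q_\me N_\me u^+\cdot u^-\,dx\le 0$ iff all off-diagonal entries of $N_\me$ are nonnegative, by \cite[Lemma~4.6]{KraMugNic20} (or a direct sign argument: $u^+_j\ge 0$, $u^-_i\le 0$, so the off-diagonal contributions $N_{ij}u^+_ju^-_i$ are $\le 0$ precisely when $N_{ij}\ge 0$, and the diagonal contributions vanish since $u^+_ju^-_j=0$). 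For the vertex part: $Q_\mv$ is diagonal positive and, using $\mmx^-_\mv=P^{(d)}_\mv\gamma_\mv(u)^-=P^{(d)}_\mv\gamma_\mv(u^-)$ together with the self-adjointness of $P^{(d)}_\mv$, this term equals $\sum_\mv Q_\mv(B_\mv+C_\mv P^{(d)}_\mv)\gamma_\mv(u^+)\cdot\gamma_\mv(u^-)_\mv$ restricted to $Y^{(d)}_\mv$; it is $\le 0$ as soon as $B_\mv+C_\mv P^{(d)}_\mv$ has nonnegative entries (componentwise positivity), since again $\gamma_\mv(u^+)$ has nonnegative and $\gamma_\mv(u^-)$ nonpositive entries. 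This proves the first assertion. For the special case $B_\mv=0$, the vertex term becomes $\sum_\mv Q_\mv C_\mv P^{(d)}_\mv\gamma_\mv(u^+)\cdot P^{(d)}_\mv\gamma_\mv(u^-)$; writing $\mmx^\pm_\mv=P^{(d)}_\mv\gamma_\mv(u)^\pm$ this is $\sum_\mv Q_\mv C_\mv\mmx^+_\mv\cdot\mmx^-_\mv$, which is $\le 0$ exactly when all off-diagonal entries of $C_\mv$ are nonnegative, by the same sign argument applied on $Y^{(d)}_\mv$.

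The main obstacle is the bookkeeping at the vertices: one must be careful that the positive part is taken componentwise in $\C^{k_\mv}$ but the vertex inner product and the operators $B_\mv,C_\mv$ live on the subspaces $Y^{(d)}_\mv\subset Y_\mv\subset\C^{k_\mv}$, so the identities $\mmx^+=P^{(d)}_\mv\gamma_\mv(u^+)$ and $(B_\mv+C_\mv P^{(d)}_\mv)\gamma_\mv(u^+)\in Y^{(d)}_\mv$ only hold because of the commutation hypothesis on $P^+_\mv$ and $P^{(d)}_\mv$ and the invariance~\eqref{eq:cond-positive-0}; without these the positive part would not respect $D(\mathbb A)$ and the argument collapses. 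A secondary point is merely to note, as in the parenthetical remark before \autoref{prop:real-dyn} and in the first paragraph of this section, that the relevant sign conditions are insensitive to the scalar shift $-\lambda\mathbb I$ (resp.\ $-\mu\mathbb I$) coming from \autoref{prop:main1dyn} or \autoref{thm:new-for-dirac}, so quasi-$m$-dissipativity rather than $m$-dissipativity causes no difficulty.
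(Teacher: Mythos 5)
Your proof is correct and follows essentially the same route as the paper's: check that the minimizing projector preserves $D(\mathbb A)$, invoke the Brezis-type invariance criterion, and split the resulting expression into an edge part (handled by diagonality of $M_\me$ and the sign of the off-diagonal entries of $N_\me$) and a vertex part (handled by entrywise positivity of $B_\mv+C_\mv P^{(d)}_\mv$, resp.\ the off-diagonal entries of $C_\mv$ when $B_\mv=0$). The only blemish is a harmless sign-convention slip: having defined $u^-:=u^+-u\ge 0$, the invariance criterion should read $\bigl(\mathbb A (u^+,\mmx^+)^\top,(u^-,\mmx^-)^\top\bigr)_d\ge 0$ as in the paper, not $\le 0$; since you later treat $u^-$ as nonpositive, the two inconsistencies cancel and the argument is unaffected.
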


We stress that nonnegativity of the off-diagonal entries of $N_\me$ and {$C_\mv$} amounts to asking that the semigroups generated by $N_\me$ and $C_\mv$ are both positive.

\begin{proof}
Also in this case, it follows from the assumptions that 
\[\mathbb{P}_K^Q {u\choose\mmx}\in D(\mathbb{A})\text{ whenever }{u\choose\mmx} \in D(\mathbb{A}).
\]
By repeating the arguments in the proof of \cite[Prop.~4.9]{KraMugNic20} we obtain that the semigroup is positive if and only if
\begin{equation}\label{eq:condpos}
\left( {\mathbb A} {{u^+}\choose{\mmx^+}}, {{u^-}\choose{\mmx^-}}\right)_d \ge 0\quad \text{for all } {u\choose\mmx} \in D(\mathbb{A}).
\end{equation}
We are going to consider the two components separately. For the first one we have that, by \cite[Lemma~4.11]{KraMugNic20}, $( \mathcal A u^+, u^-) \ge 0$ 
if and only the matrices $M_\me (x)$ are diagonal and all off-diagonal entries of the matrices $N_\me (x)$ are {nonnegative}.
Let us turn to the second component: by surjectivity of $\gamma_\mv:D(\mathcal A)\to Y_\mv$ {and \eqref{eq:QvYd}}, nonnegativity of the boundary term
$ \sum_{\mv\in \mV} Q_\mv \left(B_\mv+C_\mv P^{(d)}_\mv\right) \gamma_\mv(u^+) \cdot { P^{(d)}_\mv \gamma_\mv(u^- )}$ for all $u\in D(\mathcal A)$ is equivalent to
\begin{equation}\label{eq:positC1}
 \sum_{\mv\in \mV} Q_\mv \left(B_\mv+C_\mv P^{(d)}_\mv\right) {\mmy^+_\mv \cdot { \mmy^-_\mv }}\ge 0\quad\hbox{for all }{\mmy} \in Y_\mv;
\end{equation}
  or,  in the special case $B=0$, to
\begin{equation}\label{eq:positC2} 
\sum_{\mv\in \mV} Q_\mv C_\mv \mmx^+_\mv \cdot \mmx^-_\mv \ge 0\quad\hbox{for all }\mmx\in Y^{(d)}_\mv.
\end{equation}
Now, \eqref{eq:positC1} certainly holds whenever $B_\mv+C_\mv P^{(d)}_\mv$ is a positive matrix. On the other hand, by~\cite[Thm.~2.6]{Ouh05} \eqref{eq:positC2}  is equivalent to positivity of the semigroup generated by {$C_\mv$}, i.e., to the condition that the real matrix $C_\mv$ has nonnegative off-diagonal entries.
\end{proof}

Let us finally address the question whether our semigroup is $\infty$-contractive: this is a natural issue, since the prototypical example of a hyperbolic equation -- the transport equation on $\R$ -- is governed by a semigroup of isometries on $L^p(\R)$ for all $p\in [1,\infty]$.
To this aim, let us introduce the Lebesgue-type spaces
\[
{\bL}^p_{d}(\mathcal G):=\bL^p(\mathcal G)\oplus Y^{(d)},\qquad p\in [1,\infty],
\]
equipped with the canonical $p$-norm.  

\begin{prop}\label{prop:infty-contr-dyn}
Assume our standing \autoref{assum-MQN}-\ref{assum-spaces} hold with $Q_\me,Q_\mv$ identity matrices. 
Under the assumptions of \autoref{prop:main1dyn} and~\autoref{rem-Z and global}.(1) {or else of \autoref{thm:new-for-dirac} and \autoref{rem-Z and global-2}.(1)}, let for all $\me\in\mE$, all $x\in [0,\ell_\me]$, and all $\mv\in\mV$ the matrices 
\begin{itemize}
\item $M_\me(x)$ be diagonal and
\item $N_\me (x)$ generate semigroups on $L^2(\mathcal G)$ that are contractive with respect to the $\infty$-norm.
\end{itemize}
Furthermore, let
\begin{equation}\label{eq:cond-infty-contr}
(1\wedge |\xi|)\sign \xi \in {\bigoplus_{\mv\in\mV} Y_\mv } \hbox{ for all } \xi\in {\bigoplus_{\mv\in\mV} Y_\mv }\quad \hbox{and}\quad (1\wedge |\mmx|)\sign \mmx \in {\bigoplus_{\mv\in\mV} 
Y_\mv^{ (d)} } \hbox{ for all } \mmx\in {\bigoplus_{\mv\in\mV} Y_\mv^{(d)}  }.
\end{equation}
If additionally the matrix $B_\mv+C_\mv P^{(d)}_\mv$ is $\infty$-contractive for all $\mv\in \mV$, then the semigroup generated by $\mathbb A$ on $\bL^2_d(\mathcal G)$ is $\infty$-contractive.

In the special case of $B_\mv=0$ for all $\mv\in \mV$, 
the semigroup generated by $\mathbb A$ 
is $\infty$-contractive if the semigroup generated by $C_\mv$ on $\C^{k_\mv}$ is $\infty$-contractive.
\end{prop}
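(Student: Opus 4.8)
The plan is to follow the template of \autoref{prop:real-dyn} and \autoref{prop:posit-dyn}, now applying the Brezis-type characterization of invariance (the $\bL^2_d$-analogue of \cite[Lemma~4.3]{KraMugNic20}) to the convex set $C=\{z\in\C:|z|\le1\}$ and the corresponding closed convex subset $K=\bL^2_d(\mathcal G;C)$. First I would record that under our hypotheses ($Q_\me,Q_\mv$ identity) the minimizing projector $\mathbb P_K^Q$ from \autoref{lem:project} reduces to the pointwise ``truncation'' $\mathbb P_K^Q\binom{u}{\mmx}=\binom{(1\wedge|u|)\sign u}{(1\wedge|\mmx|)\sign\mmx}$, acting componentwise on $\C^{k_\me}$ and $\C^{k_\mv}$. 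Next I would verify that $\mathbb P_K^Q$ maps $D(\mathbb A)$ into itself: the first assumption in \eqref{eq:cond-infty-contr} guarantees $\gamma_\mv(\mathbb P_K^Q u)\in Y_\mv$ (using that truncation commutes with the trace, since traces are evaluation at endpoints), the second that the vertex component stays in $Y^{(d)}_\mv$, and the compatibility $\mmx_\mv=P^{(d)}_\mv\gamma_\mv(u)$ is preserved because — as in the positivity proof — one needs $P^{(d)}_\mv$ to commute with the nonlinear truncation; here this follows from \cite[Lemma~4.7]{KraMugNic20} (the spaces $Y_\mv,Y^{(d)}_\mv$ being spanned by real vectors, forced by \eqref{eq:cond-infty-contr}) together with the fact that the truncation $(1\wedge|\cdot|)\sign(\cdot)$ acts coordinatewise and hence commutes with any real orthogonal projector that leaves the coordinate subspaces invariant, exactly as in the argument yielding realness of $P^{(d)}_\mv$ in \autoref{prop:real-dyn}.

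With the invariance $\mathbb P_K^Q D(\mathbb A)\subset D(\mathbb A)$ in hand, the Brezis criterion reduces $\infty$-contractivity of the semigroup to checking, for all $\binom{u}{\mmx}\in D(\mathbb A)$,
\[
\Re\left(\mathbb A\,\mathbb P_K^Q\binom{u}{\mmx},\binom{u}{\mmx}-\mathbb P_K^Q\binom{u}{\mmx}\right)_d\le 0.
\]
I would then split this into the $\bL^2(\mathcal G)$-part and the $Y^{(d)}$-part exactly as in \autoref{prop:posit-dyn}. For the bulk term $\Re\langle(\mathcal M\tfrac{d}{dx}+\mathcal N)(\mathbb P_K^Q u),u-\mathbb P_K^Q u\rangle$, diagonality of $M_\me$ makes the first-order part vanish (the transport part is an isometry in each $p$-norm, cf.\ the $\infty$-contractivity lemma for $\mathcal A$ in \cite{KraMugNic20}), while the $N_\me$-contribution is nonpositive precisely because $N_\me(x)$ generates an $\infty$-contractive semigroup — this is the corresponding pointwise Brezis inequality in $\C^{k_\me}$. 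Integration by parts produces boundary terms $\sum_\mv T_\mv\gamma_\mv(\mathbb P_K^Q u)\cdot(\gamma_\mv(u)-\gamma_\mv(\mathbb P_K^Q u))$; combined with the genuine vertex term $\sum_\mv (B_\mv+C_\mv P^{(d)}_\mv)\gamma_\mv(\mathbb P_K^Q u)\cdot P^{(d)}_\mv(\gamma_\mv(u)-\gamma_\mv(\mathbb P_K^Q u))$ arising from the $\mathcal B,\mathcal C$ block, the assumption from \autoref{rem-Z and global}.(1) (resp.\ \autoref{rem-Z and global-2}.(1)) controls the $T_\mv$-piece, and $\infty$-contractivity of $B_\mv+C_\mv P^{(d)}_\mv$ kills the remaining vertex contribution via the same finite-dimensional Brezis inequality applied to the vector $\gamma_\mv(u)\in Y_\mv$. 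In the special case $B_\mv=0$ the vertex term collapses to $\sum_\mv C_\mv\mmx^{(1)}_\mv\cdot(\mmx-\mmx^{(1)})_\mv$ with $\mmx^{(1)}:=(1\wedge|\mmx|)\sign\mmx$, which is $\le0$ iff the semigroup generated by $C_\mv$ on $\C^{k_\mv}$ is $\infty$-contractive (again by the characterization in \cite{Ouh05}).

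Finally I would invoke \autoref{cor:group-2}-type sign bookkeeping / \autoref{rem-Z and global} only insofar as it is needed to ensure that we are in the genuinely \emph{dissipative} (not just quasi-dissipative) situation — as the text notes, contractivity with respect to the equivalent norm is what \eqref{prod-d} measures, but here $Q_\me=Q_\mv=\mathbb I$, so the equivalent and canonical norms coincide and no rescaling is needed. The main obstacle I expect is the compatibility check $\mathbb P_K^Q D(\mathbb A)\subset D(\mathbb A)$: unlike the linear cases ($C=\R$), the truncation $(1\wedge|\cdot|)\sign(\cdot)$ is genuinely nonlinear, so commuting it past $P^{(d)}_\mv$ and verifying $\gamma_\mv(\mathbb P_K^Q u)\in Y_\mv$ requires carefully using that $\mathbb P_K^Q$ acts coordinatewise together with the real-span structure of $Y_\mv$ and $Y^{(d)}_\mv$ forced by \eqref{eq:cond-infty-contr}; everything downstream is then a routine componentwise application of the Brezis/Ouhabaz finite-dimensional criteria. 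This mirrors — but is slightly more delicate than — the corresponding step in \autoref{prop:posit-dyn}, where the hypothesis that $P^+_\mv$ commutes with $P^{(d)}_\mv$ played the same role; here the analogous commutation is supplied automatically by the diagonal/real structure, so no extra hypothesis beyond \eqref{eq:cond-infty-contr} is imposed.
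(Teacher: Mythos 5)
Your proposal follows the paper's proof essentially verbatim: reduce to invariance of the unit ball of $\bL^\infty_d(\mathcal G)$ via the Brezis-type criterion from \cite[Lemma~4.3]{KraMugNic20}, identify $\mathbb{P}^Q_K$ as the pointwise truncation via \autoref{lem:project}, use \eqref{eq:cond-infty-contr} to get invariance of $D(\mathbb A)$, kill the first-order term by diagonality of $M_\me$ (the truncation and its complement having disjoint supports), and dispatch the $N_\me$- and vertex contributions via Ouhabaz's finite-dimensional characterization of $\infty$-contractivity. The only (minor, inessential) deviation is your side claim that \eqref{eq:cond-infty-contr} forces $Y_\mv,Y^{(d)}_\mv$ to be spanned by real vectors -- it does not (e.g.\ $\lin\{(1,\imath)^\top\}$ is invariant under the truncation) -- but this extra justification is not needed, as the paper deduces the domain invariance directly from \eqref{eq:cond-infty-contr}.
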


Let us remind that
\[
\sign z:=\frac{z}{|z|}, \quad z\in {\C\setminus\{0\}\quad\text{and}\quad \sign 0:= 0}.  
\]
The sign of vectors with complex entries are defined accordingly.

We observe that the proof of~\cite[Lemma~6.1]{Mug07} can be easily seen to extend to our setting, where the weight matrices $Q_\me,Q_\mv$ are {identity matrices}; accordingly, the semigroups generated by $-N_\me (x)=(n_\me^{i,j}(x))_{1\le i,j\le k_\me}$ and $-C_\mv=(c_\mv^{h\ell})_{1\le h,\ell\le k_\mv}$ are $\infty$-contractive if and only if
\[
\Re n_\me^{i,i}(x)\ge \sum_{j\ne i}|n_\me^{i,j}(x)|,\qquad \Re c_\mv^{h,h}\ge \sum_{\ell \ne h}|c_\mv^{h,\ell}|\qquad \hbox{for all $\me\in\mE$, a.e.\ $x\in (0,\ell_\me)$, and all $\mv\in\mV$}.
\]
\begin{proof}
First of all, observe that $\mathbb A$ is by assumption m-dissipative in $\bL^2_d(\mathcal G)$, hence we can apply~\cite[Lemma~4.3]{KraMugNic20} in order to study invariance of the unit ball $K$ of $\bL^\infty_d(\mathcal G)$ under the semigroup generated by $\mathbb A$. Furthermore,  we can apply \autoref{lem:project}: \eqref{eq:cond-infty-contr} now guarantees that $D(\mathbb A)$ is left invariant under $\mathbb P^Q_K$
and, in view of the known formula for the minimizing projector onto the unit ball with respect to the $\infty$-norm and of~\cite[Thm.~2.13]{Ouh05}, we deduce that the relevant condition for invariance of the unit ball of $\bL^\infty_d(\mathcal G)$ under the semigroup generated by $\mathbb A$ is
\begin{equation}\label{eq:reainftycontr}
\Re\left( {\mathbb A} {{(1\wedge |u|)\sign u}\choose{(1\wedge |\mmx|)\sign \mmx}}, {{(|u|-1)^+\sign u}\choose{(|\mmx|-1)^+\sign \mmx}}\right)_d \le 0\quad \text{for all } {u\choose\mmx} \in D(\mathbb{A}).
\end{equation}
Because $\frac{d}{dx}(1\wedge |u(x)|)\sign u(x), (|\bar u(x)|-1)^+\sign \bar u(x)$ have disjoint support, by diagonality of the matrices $M_\me(x)$ one sees that
\begin{equation}\label{eq:reainftycontr-2}
\begin{split}
\Re\int_0^{\ell_\me} & \left(M_\me(x)\frac{d}{dx}+N_\me(x)\right) (1\wedge |u(x)|)\sign u(x)\cdot (|\bar u(x)|-1)^+\sign \bar u(x) dx\\
&\quad = \Re\int_0^{\ell_\me}  N_\me(x) (1\wedge |u(x)|)\sign u(x)\cdot (|\bar u(x)|-1)^+\sign \bar u(x) dx
\end{split}
\end{equation}
hence the first term in~\eqref{eq:reainftycontr} is nonpositive if the semigroup generated by the matrix $Q_\me(x)N_\me(x)$ on the unweighted space $\C^{k_\me}$ is for a.e.\ $x\in (0,\ell_\me)$ $\infty$-contractive, since in this case the integrand in the second line of~\eqref{eq:reainftycontr-2} is a negative function.

Again by~\cite[Thm.~2.13]{Ouh05}, the boundary term  in~\eqref{eq:reainftycontr} is nonpositive if in particular $B_\mv+C_\mv P^{(d)}_\mv$ is $\infty$-contractive; or more generally, cf.\ the proof of \autoref{prop:real-dyn}, if -- provided $B_\mv=0$ -- merely the semigroup generated by $C_\mv$ is $\infty$-contractive.
\end{proof}

\begin{rem}\label{rem:noinfty}
The assumption that the matrices $M_\me,Q_\me,Q_\mv$ are diagonal is very restrictive and hints at the fact that very few linear hyperbolic systems are governed by an $\infty$-contractive semigroup. This is not overly surprising: contractive semigroups on $\bL^2_d(\mathcal G)$, which are furthermore $\infty$-contractive, too, extrapolate by the Riesz--Thorin Theorem to all $\bL^p_d(\mathcal G)$-spaces, $p\ge 2$. However, Brenner's Theorem (see~\cite[Thm.~8.4.3]{AreBatHie10}) poses a serious limit to $L^p$-well-posedness of even less general systems than ours.
\end{rem}

\section{Examples}\label{sec:examples}

\subsection{Transport equation}\label{sec:transport-sikolya}
Arguably, transport equations represent the easiest setting where our Assumptions~\ref{assum-MQN} are satisfied. Transport equations
\[
{\dot{u}_\me =c_\me u'_\me}
\]
 on a network consisting of $|\mE|$ edges of unit length with transmission conditions {in $|\mV|$ vertices} given as}
 \[
 u(t,1)\in \Ran (\mathcal I^-_\omega)^\top {\quad\text{and}\quad \mathcal I^- u(t,1) = \mathcal I^+_\omega u(t,0)}
 \]
 have been introduced in~\cite{KraSik05}, where their well-posedness in an $L^1$-setting was proved. Here,  $ c_\me>0$ are constant velocity coefficients, $\mathcal I^+_\omega$ is the Kronecker product of $\mathcal I^+$ with a column stochastic $|\mV|\times |\mE|$ matrix $\mathcal W=(\omega_{\mv\me})$, and $\mathcal I^\pm$ are the signed incidence matrices introduced in~\eqref{eq:defIv}; see~\cite[\S2]{Sik05} for details. 
 It is assumed that both { signed} incidence matrices are surjective, that is of rank $|\mV|$: { by~\cite[Thm.~2.1]{BanNam14} this is the case if and only if the graph contains neither sinks nor sources.}
 As shown  in~\cite[\S3]{Sik05}, it is possible to consider dynamic conditions as well, by replacing the second (stationary) condition above by a dynamic condition of the form
\[
\frac{\partial }{\partial t}\mathcal I^- u(t,1)=\mathcal I^+_\omega u(t,0)+C\mathcal I^- u(t,1).
\]
Well-posedness of the corresponding abstract Cauchy problem was proved in~\cite[Thm.~4.5]{Sik05}. Here, we are adopting a global formalism,   assuming
\[\gamma(u) := 
\begin{pmatrix} u(1) \\ u(0)\end{pmatrix} \quad
\text{ and } \quad T: = \diag\begin{pmatrix} -\diag(c_\me)_{\me\in\mE} & 0\\ 0 & \diag(c_\me)_{\me\in\mE} \end{pmatrix}, 
\]
see~\autoref{rem:bcglob}. Note that, contrary to our notation,  in \cite{Sik05} the initial endpoint of  an edge is assumed to be in 1 and the terminal endpoint is 0. In order to be able to compare the results we stick to this terminology in the context of the present Example.

Let us show that the setting in~\cite{Sik05}  is a special case of ours: we recover the above boundary conditions letting 
\[
Y:=\Ran (\mathcal I^-_\omega)^\top \oplus\C^{|\mE|}{\simeq\C^{|\mV|}\oplus\C^{|\mE|} } \quad\hbox{and}\quad Y^{(d)} := \C^{|\mV|}{\oplus \{{\mathbf{0}}\} }
\]
as well as 
\[
B:=\begin{pmatrix} 0 &\mathcal I^+_\omega \end{pmatrix} \quad\text{and} \quad P^{(d)}:=\begin{pmatrix} \mathcal I^- & 0\end{pmatrix}. 
\]
We simply take identity matrices for $Q_\me$ and $Q_\mv$.
In this way, $\gamma(u)\in Y$ imposes that the values $u_\me(\mv),u_\mf(\mv)$ agree for any two edges $\me,\mf$ with common tail $\mv$, up to proper weights:
\[\frac{u_\me(\mv)}{\omega_{\mv,\me}} = \frac{u_\mf(\mv)}{\omega_{\mv,\mf}}. \]

Observe that $\dim \Ran B = \rank (\mathcal I^+_\omega ) = |\mV|$, so $B$ is surjective and by \autoref{rem:bcglob}  we have 
\[Z= Y^\perp \oplus \Ran B^\ast {= \Ker (\mathcal I^-_\omega)  \oplus \Ran(\mathcal I^+_\omega)^\top} \subset \C^{2|\mE|}. \]
In this case $\dim  \Ran B^\ast =  \dim \Ran B = |\mV|$ and
$\dim Y^\perp = \dim \Ker (\mathcal I^-_\omega) = |\mE| - |\mV|$ { by the Rank-Nullity-Theorem}.
Accordingly, condition \eqref{eq:basis-global} is satisfied.
Hence,  the system has the right number of transmission conditions and we recover contractive well-posedness by our \autoref{prop:main1dyn}. 
We can easily apply the results in Section~\ref{sec:qualitative} and deduce that the semigroup is real (resp., positive) if and only if the matrix $\mathcal C$ is real (resp.,  has nonnegative off-diagonal entries). Furthermore, if the semigroup generated by $\mathcal C$ (resp., $\mathcal C^*$) on $Y^{(d)}$ is contractive with respect to the $\infty$-norm, then the semigroup generated by $\mathbb A$ is contractive on $L^\infty_d(\mathcal G)$ (resp., on $L^1_d(\mathcal G)$), hence on $L^p_d(\mathcal G)$ for all $p\in [2,\infty]$ (resp., for $p\in [1,2]$).

\subsection{Telegrapher's equations}

The $2\times 2$ hyperbolic system
\begin{equation}\label{syst2time2}
\left\{\begin{array}{ll}
\dot{p}+L q' +G p+Hq=0 \quad\hbox{ in } (0,\ell) \times(0,+\infty),\\
\dot{q}+P p' +K q+Jp=0 \quad\hbox{ in } (0,\ell)\times(0,+\infty),
\end{array}
\right.
\end{equation}
on a real interval $(0,\ell)$ generalizes the first order reduction of the wave equation and offers a general framework to treat models that appear in several applications. The analysis of this system on networks
with different boundary conditions 
has been performed in \cite{Nic:2017}.

In electrical engineering
\cite{MaffucciMiano:06,ImpJol14, Bec16},
$p$ (resp. $q$) represents the voltage $V$ (resp. the electrical current $I$) at $(x, t)$,
 $H=J=0$, $L=\frac{1}{C}$, $P=\frac{1}{L}$,
$G=\frac{\hat{G}}{C}$, $K=\frac{R}{L}$, where $C>0$ is the capacitance, $L>0$ the inductance, $\hat{G}\geq 0$ the conductance,
and $R\geq 0$ the resistance: \eqref{syst2time2} is then referred to as ``telegrapher's equation''. 

Also Maxwell's equations in tube-like 3D domains can be intuitively reduced to a system of 1D networks~\cite{ImpJol12} 
for $P=L=-1$ and $G=H=K=J=0$,  where $p$ (resp. $q$) represents the electric field $E$ (resp. the magnetic field $B$). Accurate asymptotic analysis of the system shows that the 1D model is indeed related to the full 3D model, up to errors that can be estimated~\cite{ImpJol14}; more general settings have been considered in~\cite{BecImpJol14,Bec16}. The 1D Maxwell's equations is also derived from physical principles in~\cite[\S~2]{Whe97}, thus obtaining again a special instance of~\eqref{syst2time2}.

Assuming that $L,P$ are two real numbers both positive or both negative, \autoref{assum-MQN} hold for system
\eqref{syst2time2} with $u_\me=(p_\me, q_\me)^\top$ and
\begin{equation*}\label{eq:Qhypsys}
M_\me=
-
\begin{pmatrix}
0 & L\\ P & 0
\end{pmatrix},\quad
N_\me=-\begin{pmatrix}
G & H\\ K & J
\end{pmatrix},
\quad \hbox{and}\quad
Q_\me=\begin{pmatrix}
|P| & 0\\ 0 & |L|
\end{pmatrix}.
\end{equation*}
In such a case, we see that
\[
Q_\me M_\me=\begin{pmatrix}
0 & L |P|\\ L |P| & 0
\end{pmatrix}.
\]

Since telegrapher's equation \eqref{syst2time2}
on networks
with non dynamic boundary conditions from \cite{Carlson:11,Nic:2017} enters into the framework
of~\cite{KraMugNic20}, we 
here concentrate on dynamic boundary conditions. 
We first start with a simple example and then consider a system set on a star-shaped network.

\subsubsection{Maxwell system with dynamic boundary conditions}\label{exa:maxdynamic}
Let us study the Maxwell system
\begin{equation}\label{eq:max1old}
\left\{
\begin{split}
\dot{p}&=q',\\
\dot{q}&= p',
\end{split}
\right.
\end{equation}
a special case of~\eqref{syst2time2}, on two adjacent intervals $\me_1=(-1,0)$ and $\me_2=(0,1)$ (with common vertex $\mv_0\equiv 0$). We denote by
$u_i:=(p_i, q_i)^\top$ the unknowns on the edge $\me_i$, $i=1,2$. We impose electric boundary condition at $-1$ and the magnetic condition at $1$
complemented by continuity of $p$ in 0 
along with a dynamic boundary condition.
This means that the boundary/dynamic conditions can be written as
\begin{eqnarray}
p_1(t,-1)&=&q_2(t,1)=0,\label{eq:condelectric}\\
p_1(t,0)&=&p_2(t,0),\label{eq:condDyn1}\\
\frac{d}{dt}p_{1}(t,0)&=&q_2(t, 0)- q_1(t, 0).\label{eq:condDyn2}
\end{eqnarray}
To write the system in the formalism introduced in Section~\ref{sec:dynam} we  define 
\begin{eqnarray*}
\gamma_{\mv_{-1}}(u)&:=&(p_1(-1),q_1(-1)) \subset\C^2,\\
\gamma_{\mv_1}(u)&:=&(p_2(1),q_2(1)) \subset\C^2,\\
\gamma_{\mv_0}(u)&:=&(p_1(0),q_1(0),p_2(0),q_2(0)) \subset\C^4.
\end{eqnarray*}
In the vertices $\mv_{-1}$ and $\mv_1$ we only have stationary boundary conditions \eqref {eq:condelectric} which are satisfied by taking
\[ Y_{\mv_{-1}}:= \{0\}\oplus\C\quad\text{and}\quad  Y_{\mv_1}:= \C\oplus\{0\}.\]
In $\mv_0$ we  enforce the stationary  condition \eqref{eq:condDyn1} by taking
 $Y_{\mv_0}:=\{ (1,0,-1,0)^\top\}^\perp$ while for the dynamic condition  we take
\[Y_{\mv_0}^{(d)}:=\lin\{ (1,0,1,0)^\top\} \subset Y_{\mv_0},
\]
and define
\[
B_{\mv_0}:=\left(\begin{array}{llll}
0&-1&0&1\\
0&0&0&0\\
0&-1&0&1\\
0&0&0&0
\end{array}
\right).
\]
With this choice,  we see that  
 \eqref{eq:condDyn2} is equivalent to
 \begin{equation}\label{DynEE}
 \dot{\mmx}_{\mv_0}= B_{\mv_0} \gamma_{\mv_0}(u) \quad\text{where}\quad \mmx_{\mv_0}=P_{\mv_0}^{(d)} \gamma_{\mv_0}(u)\text{ and } \gamma_{\mv_0}(u)\in Y_{\mv_0}.
 \end{equation}
 Now, by taking $Q_{\mv_0}=I$, we notice that the boundary term in \eqref{eq:Adissdyn} corresponding to
 $\mv_0$ is equal to
 \[
 \Re\left(p_1(0)\bar q_1(0)-p_2(0)\bar q_2(0)+(q_2(0)-q_1(0))\bar p_1(0)\right),
 \]
 which by \eqref{eq:condDyn1} is zero. Similarly,  due to the boundary condition at  the two endpoints $\mv_{-1}, \mv_1$, their  corresponding boundary terms in \eqref{eq:Adissdyn} are zero. We are thus in the setting of \autoref{rem-Z and global}.(2).
  
 So, it remains to check {\eqref{eq:basis}}. But as $B_{\mv_0} $ is surjective, $
Z_{\mv_0}$ is given by \eqref{defZmvBsurjective}
and since $\Ran B_{\mv_0}^\ast=\lin\{(0, -1, 0, 1)^\top\}$, we find
\[
\widetilde Z_{\mv_0} = Z_{\mv_0}  = \lin\{ (1,0,-1,0)^\top, (0, -1, 0, 1)^\top \}
\]
For the two endpoints $\mv_{-1}, \mv_1$,
 we only have stationary conditions, hence $Z_{\mv_{-1}} = Y_{\mv_{-1}}^\perp$, $Z_{\mv_{1}} = Y_{\mv_{1}}^\perp$ and
 \[ \widetilde Z_{\mv_{-1}}  = \C\oplus\{0\}\oplus\{0\}\oplus\{0\},\quad  \widetilde Z_{\mv_{1}}  = \{0\}\oplus\{0\}\oplus\{0\}\oplus \C. \]
It is now easy to verify the dimension equation \eqref{eq:basis-surj}, hence \autoref{cor:group} can be applied.  We finally obtain that the considered problem is governed by a unitary group.

According to \autoref{prop:real-dyn}  the  group is real since all involved constants are real, but we may expect that it   does not preserve positivity and is not $\infty$-contractive  since $M_\me$ is not diagonal.

\subsubsection{Telegrapher's equations with dynamic boundary conditions}\label{exa:Imperialedynamic}
Here we analyze the electrical formulation of system \eqref{syst2time2} on a $Y$-shaped structure
 with the transmission conditions from \cite[\S 8.2]{Bec16}
at the common vertex (called the improved Kirchhoff condition). Hence, in reference to the electrical interpretation, we assume that
 $P$ and $L$ are two positive constants, $H=J=G=K=0$, further
$p$ (resp. $q$) is denoted by $V$ (resp. $I$).
More precisely, the network consists of three edges $\me_i$, $i=0,1,2$
identified with $(0,1)$ having a common vertex $\mv_1\equiv 0$, where the edge $\me_0$ plays a specific rule
since the transmission conditions at $0$ from \cite[(8.9)]{Bec16} are given by
\begin{equation}\label{eq:boundary2intro}\begin{split} 
\sum_{k=1}^{2}{\cL}_{j k} \dot{I}_{k}(t,0)&=V_{0}(t,0)-V_{j}(t,0) \quad \text{for }j \in \{1,2\}, t>0,\\
\dot{V}_{0}(t,0)&=-\sum_{j=0}^{2}I_{j}(t,0)\quad t>0
\end{split}
\end{equation}
 where 
 $\cL=(\cL_{j k})_{2 \times 2}$ is a symmetric, real-valued positive definite matrix.
 Here, for simplicity we take all the other coefficients equal to 1. 
 At the endpoints, we take the boundary conditions 
\begin{equation}
\label{eq:electricimperial}
I_0(t,1)=V_j(t,1)=0 \quad \text{ for } j=1,2.
\end{equation}

To write the system in our formalism, we define
\[
\gamma_{\mv_1}(u)=( I_1(0), I_2(0), I_0(0), V_1(0), V_2(0), V_0(0))^\top,
\] 
so that $\C^{k_{\mv_1}}=\C^6$. 
Since only dynamical conditions are imposed at $\mv_1$, we take 
 $Y_{\mv_1}:=\C^6$, we choose
 \[Y_{\mv_1}^{(d)}:=\lin\{ (1,0,0,0,0,0)^\top, (0,1,0,0,0,0)^\top, (0,0,0,0,0,1)^\top\},
\] and we define  { 
\[B_{\mv_1}:= 
 \left(
 \begin{array}{cccccc}
 0& 0& 0& -a_{11}& -a_{12}& a_{11}+a_{12}\\
 0& 0& 0&-a_{12}& -a_{12}& a_{12}+a_{22}\\
 0&0&0&0&0&0\\
0&0&0&0&0&0\\
0&0&0&0&0&0\\
- 1& -1& -1& 0&0&0
 \end{array}
 \right),
 \]
where 
$\cL^{-1}=\left(
 \begin{array}{cc}
 a_{11}& a_{12}  \\
 a_{12}& a_{22} 
 \end{array}
 \right)$.}
 With these notations we see that \eqref{eq:boundary2intro} is equivalent to 
 \[\dot{\mmx}_{\mv_1}= B_{\mv_1} \gamma_{\mv_1}(u)\quad
\text{ where }\quad\mmx_{\mv_1}=P_{\mv_1}^{(d)} \gamma_{\mv_1}(u).\]
 Now, by taking 
 $Q_{\mv_1}=PL \diag(\cL, 1)$, we notice that the boundary term in \eqref{eq:Adissdyn} corresponding to
 $\mv_1$ is equal to 0.
 
 We immediately check that $\Ran B_{\mv_1}= Y_{\mv_1}^{(d)}$
hence, by \eqref{defZmvBsurjective}, $Z_{\mv_1} = \Ran B_{\mv_1}^{\ast}$.
Further, 
 \eqref{eq:electricimperial} 
 yields
\[
\tilde \mw^{(\mv_2, 1)}=(0,0,1,0,0,0)^\top, \tilde\mw^{(\mv_3, 1)}=(0,0,0,1,0,0)^\top,
\tilde\mw^{(\mv_3, 2)}=(0,0,0,0,1,0)^\top.
\]
Since the three columns of $B_{\mv_1}^{\ast}$ and 
 these three vectors form a basis of $\mathbb{C}^6$, \autoref{cor:group} shows that the considered problem is governed by a group of isometries.
 
 Note if in \eqref{syst2time2} we allow 
 $H, J, G$ and $K$ to be different from zero, the considered problem is governed by a group.
 
 As before, according to \autoref{prop:real-dyn} , the  group is real since all involved constants are real, but  we are not able to say anything about positivity or $\infty$-contractivity since $M_\me$ is not diagonal.

\subsection{Second sound in networks}\label{sec:secsound}
A wave-like form of thermal propagation has been conjectured to exist in ultracold gases by Lev Landau and is now known under the name of ``second sound''; it has ever since been experimentally observed in several molecules. One classical model boils down to the linear equations of thermoelasticity 
 \begin{equation}\label{systsecondsound}
\left\{
\begin{array}{rcll}
\ddot{z}-\alpha z'' + \beta \theta'&=&0 \quad &\hbox{ in } (0,\ell) \times(0,+\infty),\\
\dot{\theta}+\gamma q'+\delta \dot{z}'&=&0 \quad &\hbox{ in } (0,\ell) \times(0,+\infty),\\
\tau_0 \dot{q}+q + \kappa \theta'&=&0 \quad&\hbox{ in } (0,\ell) \times(0,+\infty),
\end{array}
\right.
\end{equation}
where $z$, $\theta$, and $q$ represent the displacement, the temperature difference to a fixed reference temperature, and the heat flux, respectively, and $\alpha, \beta,\gamma, \delta, \tau_0,\kappa$ are positive constants. Racke has discussed in~\cite{Rac02} the asymptotic stability of this system under three different classes of boundary conditions, including
\[
\alpha z'(0)=\beta\theta(0),\quad \theta'(0)=0,\quad z(\ell )=\theta(\ell)=0.
\]
While he does not point it out explicitly, this leads indeed to a dynamic condition: indeed, $\theta'(0)$ is not well-defined if $\theta$ is merely of class $H^1$, but assuming that the initial data are smooth enough that the third equation in~\eqref{systsecondsound} can be evaluated at 0,
yielding
\[
\tau_0 \dot{q}(0)+q(0) + \kappa \theta'(0)=0
\]
the condition $\theta'(0)=0$ leads to
\begin{equation}\label{Dynbcsecondsound}
 \dot{q}(0)=-\frac{1}{\tau_0} q(0),
\end{equation}
which can indeed be made sense of even for general initial data, and then studied by the method introduced in the previous section. In summary we now study system \eqref{systsecondsound}
with the dynamic boundary condition \eqref{Dynbcsecondsound} and the stationary ones
\begin{equation}\label{bcsecondsound}
\alpha z'(0)=\beta\theta(0),\quad \quad z(\ell )=\theta(\ell)=0.
\end{equation}

We observe that \autoref{assum-MQN} are satisfied taking $u = (z', \dot{z}, \theta, q)$,
\[
M_\me := \begin{pmatrix}
 0& 1& 0& 0\\
 \alpha& 0& -\beta&0\\
 0& -\delta& 0 &-\gamma\\
 0& 0& -\frac{\kappa}{\tau_0}& 0
\end{pmatrix},\quad
Q_\me := \begin{pmatrix}
 \alpha\delta& 0& 0& 0\\
 0& \delta& 0&0\\
 0& 0 &\beta &0\\
 0& 0& 0 &\frac{\beta\gamma\tau_0}{\kappa}
\end{pmatrix},\quad \text{and}\quad
N_\me := \begin{pmatrix}
 0 & 0& 0& 0\\
 0& 0 & 0&0\\
 0& 0 & 0 &0\\
 0& 0& 0 & -\frac{1}{\tau_0}
\end{pmatrix}.\]
A direct computation shows that
\[
Q_\me M_\me=\begin{pmatrix}
0 & \alpha\delta & 0 & 0\\
\alpha\delta & 0 & -\beta\delta & 0\\
0 & -\beta\delta & 0 & -\beta \gamma\\
0 & 0 & -\beta\gamma & 0
\end{pmatrix}
\]
with four eigenvalues of the form
$\pm\sqrt{\frac{H\pm2\sqrt{K}}{2}},$
where $H:= \alpha^2 \delta^2+\beta^2 \delta^2+ \beta^2\gamma^2$ and
$K:= H^2 - 4 \alpha^2\beta^2\gamma^2\delta^2$.
 Because $H^2>K$ whenever $\alpha,\beta,\gamma,\delta>0$, $Q_\me M_\me$ has two positive and two negative eigenvalues. This is consistent with the above choice \eqref{Dynbcsecondsound}-\eqref{bcsecondsound}
of boundary conditions in the purely hyperbolic case of $\tau_0>0$.

If the endpoint $0$ (resp. $\ell$) is identified with $\mv_1$ (resp. $\mv_2$), we take
\[
Y_{\mv_1}= \left\{x\in \C^4: x_1=\frac{\beta}{\alpha} x_3\right\},\quad 
Y^{(d)}_{\mv_1}= \{0\}\oplus \{0\}\oplus \{0\}\oplus \C\subset Y_{\mv_1},
\]
and
\[
Y_{\mv_2}=\C\oplus \{0\}\oplus \{0\}\oplus \C.
\]
Observe that $\gamma_{\mv_1}(u)=u(0)\in Y_{\mv_1}$, $\gamma_{\mv_2}(u)=u(\ell)\in Y_{\mv_2}$ return all stationary conditions, whereas
\[
\frac{d}{dt}P^{(d)}_{\mv_1}\gamma_{\mv_1}(u)=C_{\mv_1} P^{(d)}_{\mv_1}\gamma_{\mv_1}(u)
\]
with 
\[
C_{\mv_1}=\begin{pmatrix}
0 & 0 & 0 & 0\\
0 & 0 & 0 & 0\\
0 & 0 & 0 & 0\\
0 & 0 & 0 & -\frac{1}{\tau_0}
\end{pmatrix}
\]
corresponds to the  dynamic condition~\eqref{Dynbcsecondsound}. Also, observe that 
$B_{\mv_1}=0$, therefore by \eqref{defZmv} we have
\[
Z_{\mv_1}= Y_{\mv_1}^{\perp} + \Ker B_\mv^\ast=Y_{\mv_1}^{\perp} +  Y^{(d)}_{\mv_1}=\lin\{ (\alpha ,0,-\beta,0)^\top, (0, 0, 0, 1)^\top\}.
\]
Furthermore,
 \[
{Z_{\mv_2}=} Y_{\mv_2}^\perp= \lin\{ (0 ,1,0,0)^\top, (0 ,0,1,0)^\top\},
\]
and it is easy to see that \eqref{eq:basis} applies.

 Now, by taking 
 $Q_{\mv_1}=\tau_0 \beta$, we notice that the boundary term in~\eqref{eq:Adissdyn-2}  corresponding to
 $\mv_1$ is equal to 
 \begin{eqnarray*}
 2\beta \gamma \Re (\theta(0)\overline q(0))-\beta |\theta(0)|^2
 &=&-|\gamma q(0)-\beta \theta(0)|^2+ \gamma^2 |q(0)|^2
 \\
 &\leq& \gamma^2 |q(0)|^2= \gamma^2 |P_{\mv_1}^{(d)} \gamma_{\mv_1}(u)|^2.
 \end{eqnarray*}
 Hence,  in view of \autoref{prop:main1dyn}, the system is well-posed. More precisely, the initial value problem associated with~\eqref{systsecondsound} with the above boundary conditions is governed by a strongly continuous semigroup on $L^2_d(\mathcal G)\equiv L^2(0,\ell)\oplus Y^{(d)}$.

As before, according to \autoref{prop:real-dyn},  the  semigroup is real since all involved constants are real, but again  positivity and  $\infty$-{contractivity cannot  be checked by our abstract results}  since $M_\me$ is not diagonal.

 System \eqref{systsecondsound} on a network with  stationary boundary conditions at the nodes, namely  continuity of $z$ and $q$ and Kirchhoff-type conditions for $z'$ and $\theta$,
 were described in~\cite[\S~5.6]{KraMugNic20}. With the method described above, we can e.g.\ impose dynamic conditions on the vertex evaluation of $z$ and/or $q$ at an arbitrary subset of $\mV$, while keeping Kirchhoff-type conditions  for $z'$ and $\theta$, still retaining a well-posed system.

\subsection{Wave type equations\label{exa:waves}}

Wave-type equations on graphs   have retained the attention of many authors, {see  \cite{Ali94,LagLeuSch94,Mug14,AMbook} and the references cited there. 
Here we 
show that our framework can be applied} to rather general elastic systems modeled as 
\begin{equation}\label{eq:waves}
\ddot{u}_\me(t,x)=u_\me''(t,x)+\alpha_\me \dot{u}_\me'(t,x)+\beta_\me \dot{u}_\me(t,x)+\gamma_\me u'_\me(t,x),\qquad t\ge 0,\ x\in (0,\ell_\me),
\end{equation}
where $\alpha_\me \in C^1([0,\ell_\me])$
and $\beta_\me, \gamma_\me\in L^\infty (0,\ell_\me)$ are real-valued functions. For the sake of simplicity, as in~\cite[\S5.23]{KraMugNic20} we restrict ourselves to stars with $J\geq 2$ edges as in Figure~\ref{fig:star}, which can be regarded as building blocks of more general networks, but contrary to ~\cite[\S5.23]{KraMugNic20} we assume 
 that the edges are connected by a point mass at their common vertex, see
 \cite{Castro:97,HansenZuazua:95}
 for $J=2$ and the wave equation, i.e., $\alpha_\me =\beta_\me=\gamma_\me =0$
 (see also \cite{MorgulRaoConrad:94} for a cable with a tip mass).
\begin{figure}[H]
\begin{center}
\begin{tikzpicture}
\foreach \x in {25,45,-25,-45}{
\draw[-{Latex[scale=1.5]}] (\x:0cm) -- (\x:3.6cm);
\draw[fill] (\x:3.6cm) circle (2pt);
}
\draw[-{Latex[scale=1.5]}] (180:3.6cm) -- (180:0cm);
\draw[fill] (180:3.6cm) circle (2pt);
\draw[fill] (0:0cm) circle (2pt) node[above]{$\mv_0$};
\node at (180:3.6) [anchor=south] {$\mv_1$};
\node at (180:1.8) [anchor=south] {$\me_1$};
\node at (45:3.6) [anchor=west] {$\mv_2$};
\node at (45:1.8) [anchor=south] {$\me_2$};
\node at (25:3.6) [anchor=west] {$\mv_3$};
\node at (25:1.8) [anchor=south] {$\me_3$};
\node at (-25:3.6) [anchor=west] {$\mv_{J-1}$};
\node at (-25:1.8) [anchor=south] {$\me_{J-1}$};
\node at (-45:3.6) [anchor=west] {$\mv_J$};
\node at (-45:1.8) [anchor=south] {$\me_J$};
\foreach \y in {20,15,10,5,0,-5,-10,-15,-20}{
\draw[fill] (\y:2.4cm) circle (.5pt);
}
\end{tikzpicture}
\caption{A star-shaped network with one incoming and $J-1$ outgoing edges.}\label{fig:star}
\end{center}
\end{figure}
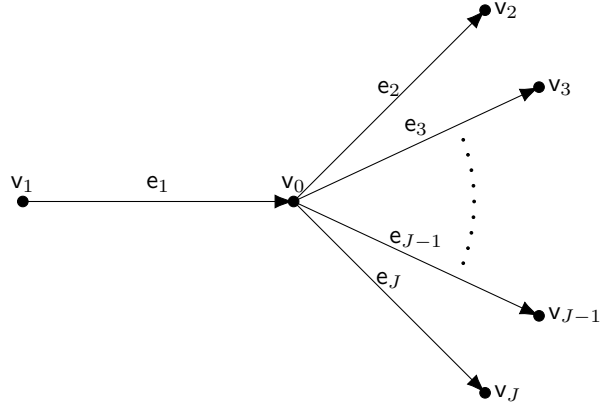
It turns out that \eqref{eq:waves} is equivalent to
\[
\dot{U}_\me= M_\me U_\me'+ N_\me U_\me,
\]
for the vector function $U_\me=(u_\me', \dot{u}_\me)^\top
$, where
\[
M_\me=\begin{pmatrix}
0 & 1 \\ 1 & \alpha_\me
\end{pmatrix},\qquad
N_\me=\begin{pmatrix}
 0 & 0\\
 \gamma_\me & \beta_\me \\
\end{pmatrix}.
\]
As $M_\me$ is symmetric, \autoref{assum-MQN} are automatically satisfied by choosing $Q_\me$ as the identity matrix.
As before, the boundary conditions at the vertices are related to the values of $M_\me$
at
the endpoints of the edge $\me$, that generically are given by
\[
M_\me(\mv)=\begin{pmatrix}
0& 1 \\ 1 & \alpha_\me(\mv)
\end{pmatrix},
\]
when $\mv$ is one of the endpoints of $\me$; hence $M_\me(\mv)$ has two real eigenvalues of opposite sign, 
\[
\lambda_\pm=\frac{1}{2}\left(\alpha_\me(\mv)\pm\sqrt{\alpha_\me(\mv)^2+4}\right).
\]
We then need $J$ boundary conditions at the common node $\mv_0$
and one boundary condition at each endpoint $\mv_i$, $i=1,\dots, J$.

For an exterior vertex $\mv_i$ ($i=1,\dots, J$)  we choose 
 Dirichlet boundary condition
 \[
u_{\me_i}(\mv_i)=0,
 \]
 that leads to
 $\dot{u}_{\me_i}(\mv_i)=0,$
and corresponds to the choice of $Y_{\mv_i}$ spanned by {$(1,0)^\top$} that is a totally isotropic subspace associated with $T_{\mv_i}$, {whereby $T_{\mv_1} = -M_{\me_1}(\mv_1)$ and $T_{\mv_i} = M_{\me_i}(\mv_i)$ for $i=2,\dots,J$.} 
We refer to~\cite[\S5.2]{KraMugNic20} for other boundary conditions at the exterior vertices.

 Now, inspired by \cite{Castro:97,HansenZuazua:95}, we impose the following boundary conditions at $\mv_0$, namely continuity of $u_\me$ at $\mv_0$ and 
\begin{equation}\label{eq:boundarymass} 
 {
-\sum_{i=1}^J u_{\me_i}'(\mv_0) \iota_{\mv_0{\me_i}}= \delta \ddot{u}_{\me_1}(\mv_0),
}
\end{equation}
for some positive constant $\delta$. Let us check that such a boundary condition corresponds to a dynamical one.
Indeed,  the continuity condition of $u_\me$ at $\mv_0$
implies that
{\[
\gamma_{\mv_0}(U)\in Y _{\mv_0} :=
\{(x,y)^\top\ :\ x\in \C^J, y=\alpha{\mathbf 1}, \alpha \in \C \}=\left(\C^{J}\oplus \{{\mathbf 0^\top}\}\right)\oplus
\lin \{({\mathbf 0}, {\mathbf 1})^\top\},
\]}
where we write
\[
\gamma_{\mv_0}(U) :=(u'_{\me_1}(\mv_0), \dots, u'_{\me_J}(\mv_0), \dot{u}_{\me_1}(\mv_0), \dots, \dot{u}_{\me_J}(\mv_0))^\top,
\]
and
 {${\mathbf 1}, {\mathbf 0}$ are the row vectors in $\C^{J}$} whose  all entries equal  1 and $0$, respectively. In order  to formulate \eqref{eq:boundarymass} in our setting, we
 set
 \[
Y^{(d)} _{\mv_0} := \lin \{ {({\mathbf 0}, {\mathbf 1})}^\top\}\subset Y _{\mv_0},
 \]
 and introduce $B_{\mv_0}$ as the  {$2J\times  2J$} matrix  
 \[ {
{B}_{\mv_0} := -\frac{1}{\delta}
 \left(
 \begin{array}{llll}
 {\mathbf 0}&{\mathbf 0}\\
 \vdots&\vdots
 \\
 {\mathbf 0}& {\mathbf 0}\\
 \iota_{\mv_0,\ast}& {\mathbf 0}\\
  \vdots&\vdots
 \\
\iota_{\mv_0,\ast}& {\mathbf 0}\\
 \end{array}
 \right), }
 \]
{where $\iota_{\mv_0,\ast}$ is the row of the incidence matrix $\mathcal I$ corresponding to $\mv_0$.}
 We then readily see that \eqref{eq:boundarymass} is equivalent to
{
 \[
 \dot\mmx_{\mv_0}= B_{\mv_0} \gamma_{\mv_0}(U),
 \]}
 where 
 ${\mmx_{\mv_0}}=P^{(d)}_{\mv_0} \gamma_{\mv_0}(U)$, recalling the continuity condition of $\dot{u}_\me$ at $\mv_0$.

 Now, by taking 
 $Q_{\mv_0}=\delta$, we notice that the boundary term in \eqref{eq:Adissdyn} corresponding to
 $\mv_0$ is equal to 
 \begin{eqnarray*}
\frac{|\dot{u}_\me(\mv_0)|^2}{2}\sum_{i=1}^J\alpha_{\me_i} (\mv_0) \iota_{\mv\me_i}
=
\frac{|P^{(d)}_{\mv_0} \gamma_{\mv_0}(U)|^2}{2}\sum_{i=1}^J \alpha_{\me_i} (\mv_0)\iota_{\mv\me_i}.
 \end{eqnarray*}

Finally, we readily check that $\Ran B_{\mv_0}= Y_{\mv_0}^{(d)}$, hence by  \eqref{defZmvBsurjective} and since $\Ran B_{\mv_0}^\ast=Y_{\mv_0}^{(d)}$ as well, we find
\[
Z_{\mv_0}=  {\{{\mathbf 0}^\top \} \oplus \C^J.}
\]
Further, as {$Z_{\mv_i} = Y_{\mv_i}^\perp$} and $\sum_{i=1}^J \widetilde Y_{\mv_i}^\perp={\C^{J}\oplus \{{\mathbf 0}^\top\}}$, 
{ \eqref{eq:basis-surj} holds for $k=2J$} and we conclude that system \eqref{eq:waves}
with the previous boundary conditions is governed by a group.

 In conclusion owing to \autoref{prop:main1dyn} the system is well-posed. More precisely, the initial value problem associated with~\eqref{systsecondsound} with the above boundary conditions is governed by a strongly continuous  group on $L^2_d(\mathcal G)\equiv L^2(0,\ell)\oplus Y^{(d)}$.

As before according to \autoref{prop:real-dyn}  the  semigroup is real since all involved constants are real, but again assessing either positivity or  $\infty$-contractive is problematic since $M_\me$ is not diagonal.

We have discussed in~\cite{KraMugNic20} how our formalism can be used to study networks of beams under rather general transmission conditions of stationary type. We restrain from elaborating on this topic, but it should by now be clear to the reader that suitable, \textit{different} choices of $Y_\mv$ (cf.\ \autoref{rem:comp-par-hyp}), and of course suitable choices of $Y^{(d)}_\mv$, promptly lead to models of networks of beams with dynamic transmission conditions, which can then be studied by our theory. We mention that comparable well-posedness results have been recently obtained in~\cite{GreMug20b}.

{
\subsection{The Dirac equation}\label{sec:dirac}
The 1D Dirac equation on a network, as studied in~\cite{BolHar03}, takes on each edge the form
\[
\imath \hbar \frac{\partial}{\partial t} \psi=\left(\hbar c\begin{pmatrix}0 & -1\\ 1 & 0\end{pmatrix} \frac{\partial }{\partial x} +mc^2 \begin{pmatrix}1 & 0\\ 0 & -1\end{pmatrix}\right)\psi\
\]
for a $\C^2$-valued unknown $\psi=(\psi^{(1)},\psi^{(2)})$. A parametrization of skew-adjoint realizations on a network has been presented in~\cite{BolHar03} and in~\cite{KraMugNic20} we have taken advantage of our theory and provided further realizations generating (semi)groups, since our \autoref{assum-MQN} are satisfied letting
\[
M_\me =\begin{pmatrix}
0 & \imath c\\
-\imath c & 0
\end{pmatrix},\quad
Q_\me =\begin{pmatrix}
1 & 0\\
0 & 1
\end{pmatrix},\quad\hbox{and}\quad N_\me =\begin{pmatrix}
- \imath \frac{mc^2}{\hbar} & 0\\
 0 & \imath \frac{mc^2}{\hbar}
\end{pmatrix},\qquad \me\in\mE.
\]
{
Let us now study the quadratic form {$q_\mv$}, cf.~\eqref{eq:qv-def}.
We first  observe 
that {$T_\mv$ is a $2|\mE_\mv|\times 2|\mE_\mv|$ block diagonal matrix with diagonal blocks equaling $M_\me \iota_{\mv\me}$.
Hence, 
if we write
\[
\gamma_\mv(U):=(\psi^{(1)}_{\me}(\mv), \psi^{(2)}_{\me}(\mv))_{\me\in \mE_{\mv}},
\]
then
 $(\xi,\eta)^\top\in \C^{2|\mE_\mv|}$, with $\xi := (\psi^{(1)}_\me(\mv))_{\me\in\mE_\mv}$ and $\eta :=(\psi^{(2)}_\me(\mv))_{\me\in\mE_\mv}$} 
 is an isotropic vector for the associated quadratic form {$q_\mv$} if and only if 
\begin{equation}\label{eq:vanishingima-new}
\sum_{\me\in \mE_\mv} \iota_{\mv \me} \Im(\xi_{\me} \bar \eta_{\me})=0.
\end{equation}
A somewhat canonical choice is that of conditions of continuity and of Kirchhoff-type    on $\psi^{(1)}$ and $\psi^{(2)}$, respectively, at each $\mv\in\mV$; this fits in our abstract framework by letting 
 \[Y_\mv:=\lin\{{\mathbf 1}_{\mE_\mv}\} \oplus \lin\left\{\iota_{\mE_\mv}\right\}^\perp
 \]
  { (we recall that $\mathbf{\iota}_{\mE_\mv}$ denotes the vector in $\C^{|\mE_\mv|}$ whose $\me$-th entry is $\iota_{\mv\me}$)}
  and is easily seen to lead to a hyperbolic system governed by a unitary group. Further instances of the Dirac equation governed by a unitary group, and hence with a quantum mechanical significance, can be easily produced applying the theory presented above: we will only focus on one such realization. By keeping the continuity property of $\psi^{(1)}$ at the vertices, we here take  
\begin{equation}
 \label{defYvDirac-new-b}
Y_\mv:=\lin\{{\mathbf 1}_{\mE_\mv}\}\oplus \C^{{|\mE_\mv|}},
\end{equation}
and  we  let 
 \[Y^{(d)}_\mv:=\lin\{{\mathbf 1}_{\mE_\mv}\}\oplus \{\mathbf 0_{\mE_\mv}\}\subset  Y_\mv.
 \]
Let us finally define
 \[
 B_\mv \colon Y_\mv\ni  (\xi,\eta)^\top\mapsto -\imath (\eta\cdot \iota_{\mE_\mv}) ({\mathbf 1}_{\mE_\mv}\oplus {\mathbf 0}_{\mE_\mv}) \in  Y_\mv^{(d)}
 \]
  and
 \[
 {C_\mv}: Y^{(d)}\ni (\xi,0)^\top\mapsto ( {C_\mv}^{(1)}\xi,0)^\top\in  Y^{(d)},
 \] 
 for any { skew-hermitian $|\mE_\mv|\times |\mE_\mv|$-matrix $C_\mv^{(1)}$.}
 This corresponds to imposing
 \begin{itemize}
 \item continuity conditions across each vertex on $\psi^{(1)}$ as well as
 \item dynamic conditions
 \[
 \frac{d\psi^{(1)}}{dt}(t,\mv)=
 { - \imath\sum_{\me\in\mE_{\mv}}  \psi^{(2)}_{\me}(t,\mv) \iota_{\mv\me}+C_\mv^{(1)}\psi^{(1)}(t,\mv),}\qquad \mv\in\mV.
 \]
\end{itemize} 

{
Observe that
 \begin{equation}\label{eq:dimens-dirac-0}
 \dim Y_\mv = 1+ {|\mE_\mv|}, 
 \qquad \dim Y^{(d)}_\mv = 1,
 \end{equation}
 {but this is not sufficient to guarantee \eqref{eq:basis-surj}  and thus \eqref{eq:basis}.}
{As $B_\mv$ is  surjective, simple calculations show that (using the parametrization of the edges so that for both of them, $\mv_1$ is identified with 0 and 
$\mv_2$ is identified with 1)
\[
Z_{\mv_1}=Z_{\mv_2}=\lin \{(1,0,-1,0)^\top, (0,1,0,1)^\top\}, 
\]
hence \eqref{eq:basis} cannot hold.} 

{However, by taking 
 $Q_{\mv}:=c \mathbb{I}$ at each $\mv\in\mV$ one can show that $\mathbb A^\ast=-\mathbb A$.
 Furthermore, the  boundary terms in~\eqref{eq:Adissdyn-2}  vanish.
 Indeed this is clear by assumptions for the term involving $C^{(1)}$, whereas the latter boundary term is equal to
\[
-c\sum_{\me\in \mE_{\mv}}\iota_{\mv\me} \Im(\xi_{\me} \bar \eta_{\me})
-\imath c \Re((\eta\cdot \iota_{\mE_\mv}) (\mathbf{1}_{\mE_\mv}\cdot \xi))=0\qquad\hbox{for all }\mv\in\mV,
\]
since $\xi_\me=\xi,$ for all $ (\xi,\eta)^\top\in Y_\mv$, and $\Im(z)=-\Re( \imath z)$, for all $z\in \C$.
Hence, we can} invoke \autoref{cor:group-2} and \autoref{rem-Z and global-2} with $\alpha=\beta=0$ and deduce that  $\mathbb A$ generates a  unitary  group on $\bL^2_d(\mathcal G)$. This is a new unitary realization of the Dirac equation that does not appear in the classification in~\cite{BolHar03}, as the latter restricts to stationary vertex conditions.

By \autoref{prop:real-dyn}, this semigroup is not real, hence not positive, either. On the other hand, \autoref{prop:infty-contr-dyn} does not apply, although -- as mentioned in \autoref{rem:noinfty} -- it looks rather plausible that no realization of the Dirac  equation is governed by an $\infty$-contractive semigroup.
}

\bibliographystyle{abbrv}

\begin{thebibliography}{10}

\bibitem{FavGolGol01}
S.~Aizicovici and N.~Pavel, editors.
\newblock {\em The one dimensional wave equation with {W}entzell boundary
  conditions}, volume 225 of {\em Lect.\ Notes Pure Appl.\ Math.}, New York,
  2001. Marcel Dekker.

\bibitem{Ali94}
F.~{Ali Mehmeti}.
\newblock {\em Nonlinear {W}aves in {N}etworks}, volume~80 of {\em Math.\
  Research}.
\newblock Akademie, Berlin, 1994.

\bibitem{AmaEsc96}
H.~Amann and J.~Escher.
\newblock Strongly continuous dual semigroups.
\newblock {\em Ann.\ Mat.\ Pura Appl., IV Ser.}, 171:41--62, 1996.

\bibitem{AMbook}
K.~Ammari and S.~Nicaise.
\newblock {\em Stabilization of elastic systems by collocated feedback}, volume
  2124 of {\em Lecture Notes in Mathematics}.
\newblock Springer, Cham, 2015.

\bibitem{AreBatHie10}
W.~Arendt, C.~Batty, M.~Hieber, and F.~Neubrander.
\newblock {\em Vector-{V}alued {L}aplace {T}ransforms and {C}auchy {P}roblems
  -- {S}econd {E}dition}, volume~96 of {\em Monographs in Mathematics}.
\newblock Birkh{\"a}user, Basel, 2010.

\bibitem{AreMetPal03}
W.~Arendt, G.~Metafune, D.~Pallara, and S.~Romanelli.
\newblock The {L}aplacian with {W}entzell--{R}obin boundary conditions on
  spaces of continuous functions.
\newblock {\em Semigroup Forum}, 67:247--261, 2003.

\bibitem{BanNam14}
J.~Banasiak and P.~Namayanja.
\newblock Asymptotic behaviour of flows on reducible networks.
\newblock {\em Networks Het.\ Media}, 9:197--216, 2014.

\bibitem{BasCor16}
G.~Bastin and J.-M. Coron.
\newblock {\em {Stability and Boundary Stabilization of 1-D Hyperbolic
  Systems}}, volume~88 of {\em Progress in Nonlinear Differential Equations}.
\newblock Birkh{\"a}user, Basel, 2016.

\bibitem{Bec16}
G.~Beck.
\newblock {\em Mod{\'e}lisation et {\'e}tude math{\'e}matique de r{\'e}seaux de
  c{\^a}bles {\'e}lectriques}.
\newblock PhD thesis, Universit{\'e} Paris-Saclay, 2016.

\bibitem{BecImpJol14}
G.~Beck, S.~Imperiale, and P.~Joly.
\newblock Mathematical modelling of multi conductor cables.
\newblock {\em Disc.\ Cont.\ Dyn.\ Syst.\ S}, page~26, 2014.

\bibitem{BolHar03}
J.~Bolte and J.~Harrison.
\newblock {Spectral statistics for the Dirac operator on graphs}.
\newblock {\em J.\ Phys.\ A}, 36:2747--2769, 2003.

\bibitem{Carlson:11}
R.~Carlson.
\newblock Spectral theory for nonconservative transmission line networks.
\newblock {\em Netw. Heterog. Media}, 6(2):257--277, 2011.

\bibitem{Castro:97}
C.~Castro.
\newblock Asymptotic analysis and control of a hybrid system composed by two
  vibrating strings connected by a point mass.
\newblock {\em ESAIM Control Optim. Calc. Var.}, 2:231--280, 1997.

\bibitem{ConradMorgul:98}
F.~Conrad and O.~Morg\"{u}l.
\newblock On the stabilization of a flexible beam with a tip mass.
\newblock {\em SIAM J. Control Optim.}, 36(6):1962--1986, 1998.

\bibitem{CurClaZha04}
A.~Curcio, M.~Clark, M.~Zhao, and W.~Ruan.
\newblock A hyperbolic system of equations of blood flow in an arterial
  network.
\newblock {\em SIAM Journal on Applied Mathematics}, 64(2):637--667, 2004.

\bibitem{EngNag00}
K.-J. Engel and R.~Nagel.
\newblock {\em One-{P}arameter {S}emigroups for {L}inear {E}volution
  {E}quations}, volume 194 of {\em Graduate Texts in Mathematics}.
\newblock Springer-Verlag, New York, 2000.

\bibitem{FerMisQua05}
M.~Fern{\'a}ndez, V.~Milisic, and A.~Quarteroni.
\newblock {Analysis of a geometrical multiscale blood flow model based on the
  coupling of ODEs and hyperbolic PDEs}.
\newblock {\em Multiscale Modeling \& Simulation}, 4:215--236, 2005.

\bibitem{FriLax65}
K.~Friedrichs and P.~Lax.
\newblock Boundary value problems for first order operators.
\newblock {\em Comm.\ Pure Appl.\ Math.}, 18:355--388, 1965.

\bibitem{GreMug20b}
F.~Gregorio and D.~Mugnolo.
\newblock Higher-order operators on networks: hyperbolic and parabolic theory.
\newblock {\em Int.\ Equations Oper.\ Theory}, (to appear).

\bibitem{HansenZuazua:95}
S.~Hansen and E.~Zuazua.
\newblock Exact controllability and stabilization of a vibrating string with an
  interior point mass.
\newblock {\em SIAM J. Control Optim.}, 33(5):1357--1391, 1995.

\bibitem{ImpJol12}
S.~Imperiale and P.~Joly.
\newblock Error estimates for $1d$ asymptotic models in coaxial cables with
  non-homogeneous cross-section.
\newblock {\em Adv.\ Appl.\ Maths.\ Mech.}, 4:647--664, 2012.

\bibitem{ImpJol14}
S.~Imperiale and P.~Joly.
\newblock Mathematical modeling of electromagnetic wave propagation in
  heterogeneous lossy coaxial cables with variable cross section.
\newblock {\em Appl.\ Num.\ Math.}, 79:42--61, 2014.

\bibitem{KraMugNag03}
M.~Kramar, D.~Mugnolo, and R.~Nagel.
\newblock Semigroups for initial-boundary value problems.
\newblock In M.~Iannelli and G.~Lumer, editors, {\em Evolution Equations 2000:
  Applications to Physics, Industry, Life Sciences and Economics (Proc.\ Levico
  Terme 2000)}, volume~55 of {\em Progress in Nonlinear Differential
  Equations}, pages 277--297, Basel, 2003. Birkh{\"a}user.

\bibitem{KraMugNic20}
M.~Kramar, D.~Mugnolo, and S.~Nicaise.
\newblock Linear hyperbolic systems on networks.
\newblock arXiv:2003.08281, 2020.

\bibitem{KraSik05}
M.~Kramar and E.~Sikolya.
\newblock Spectral properties and asymptotic periodicity of flows in networks.
\newblock {\em Math.\ Z.}, 249:139--162, 2005.

\bibitem{LagLeuSch94}
J.~Lagnese, G.~Leugering, and E.~Schmidt.
\newblock {\em Modeling, {A}nalysis, and {C}ontrol of {D}ynamic {E}lastic
  {M}ulti-{L}ink {S}tructures}.
\newblock Systems and Control: Foundations and Applications. Birkh{\"a}user,
  Basel, 1994.

\bibitem{LaxPhi60}
P.~Lax and R.~Phillips.
\newblock Local boundary conditions for dissipative symmetric linear
  differential operators.
\newblock {\em Comm.\ Pure Appl.\ Math.}, 13:427--455, 1960.

\bibitem{MaffucciMiano:06}
A.~Maffucci and G.~Miano.
\newblock A unified approach for the analysis of networks composed of
  transmission lines and lumped circuits.
\newblock In {\em Scientific computing in electrical engineering}, volume~9 of
  {\em Math. Ind.}, pages 3--11. Springer, Berlin, 2006.

\bibitem{meyer04}
C.~D. Meyer.
\newblock {\em Matrix Analysis and Applied Linear Algebra (Solution)}.
\newblock SIAM, Philadelphia, PA, 2004.

\bibitem{MorgulRaoConrad:94}
O.~Morg\"{u}l, B.~Rao, and F.~Conrad.
\newblock On the stabilization of a cable with a tip mass.
\newblock {\em IEEE Trans. Automat. Control}, 39(10):2140--2145, 1994.

\bibitem{Mug19}
D.~Mugnolo.
\newblock What is actually a metric graph?
\newblock arXiv:1912.07549.

\bibitem{Mug07}
D.~Mugnolo.
\newblock Gaussian estimates for a heat equation on a network.
\newblock {\em Networks Het.\ Media}, 2:55--79, 2007.

\bibitem{Mug10}
D.~Mugnolo.
\newblock Vector-valued heat equations and networks with coupled dynamic
  boundary conditions.
\newblock {\em Adv.\ Diff.\ Equ.}, 15:1125--1160, 2010.

\bibitem{Mug14}
D.~Mugnolo.
\newblock {\em {Semigroup Methods for Evolution Equations on Networks}}.
\newblock Underst.\ Compl.\ Syst. Springer-Verlag, Berlin, 2014.

\bibitem{MugRom07}
D.~Mugnolo and S.~Romanelli.
\newblock Dynamic and generalized {W}entzell node conditions for network
  equations.
\newblock {\em Math.\ Meth.\ Appl.\ Sci.}, 30:681--706, 2007.

\bibitem{Nic85}
S.~Nicaise.
\newblock Some results on spectral theory over networks, applied to nerve
  impulse transmission.
\newblock In C.~Brezinski, A.~Draux, A.~P. Magnus, P.~Maroni, and A.~Ronveaux,
  editors, {\em Polyn{\^o}mes Orthogonaux et Applications (Proc.\ Bar-le-Duc
  1984)}, volume 1171 of {\em Lect.\ Notes.\ Math.}, pages 532--541, Berlin,
  1985. Springer-Verlag.

\bibitem{Nic:2017}
S.~Nicaise.
\newblock Control and stabilization of {$2\times 2$} hyperbolic systems on
  graphs.
\newblock {\em Math. Control Relat. Fields}, 7(1):53--72, 2017.

\bibitem{Ouh05}
E.~Ouhabaz.
\newblock {\em Analysis of {H}eat {E}quations on {D}omains}, volume~30 of {\em
  Lond.\ Math.\ Soc.\ Monograph Series}.
\newblock Princeton Univ.\ Press, Princeton, NJ, 2005.

\bibitem{Rac02}
R.~Racke.
\newblock {Thermoelasticity with second sound -- exponential stability in
  linear and nonlinear 1-d}.
\newblock {\em {Math. Methods Appl. Sci.}}, 25:409--441, 2002.

\bibitem{RuaClaZha07}
W.~Ruan, M.~Clark, M.~Zhao, and A.~Curcio.
\newblock Global solution to a hyperbolic problem arising in the modeling of
  blood flow in circulatory systems.
\newblock {\em J.\ Math.\ Anal.\ Appl.}, 331:1068--1092, 2007.

\bibitem{Sik05}
E.~Sikolya.
\newblock Flows in networks with dynamic ramification nodes.
\newblock {\em J.\ Evol.\ Equ.}, 5:441--463, 2005.

\bibitem{Whe97}
N.~Wheeler.
\newblock ``{E}lectrodynamics'' in 2-dimensional spacetime.
\newblock
  \url{www.reed.edu/physics/faculty/wheeler/documents/Electrodynamics/Miscellaneous%20Essays/E&M%20in%202%20Dimensions.pdf}.

\end{thebibliography}

\end{document}